%
%
%

\documentclass[reqno]{amsart}

\input xy
\xyoption{all}
\usepackage{epsfig}
\usepackage{color}
\usepackage{amsthm}
\usepackage{amssymb}
\usepackage{amsmath}
\usepackage{amscd}
\usepackage{amsopn}
\usepackage{url}
\usepackage{hyperref}\hypersetup{colorlinks}

\usepackage{stmaryrd}

\usepackage[T1]{fontenc}



\usepackage{color} 

\definecolor{darkred}{rgb}{1,0,0} 
\definecolor{darkgreen}{rgb}{0,.6,0}
\definecolor{darkblue}{rgb}{0,0,1}

\hypersetup{colorlinks,
linkcolor=darkblue,
filecolor=darkgreen,
urlcolor=darkred,
citecolor=darkgreen}

%
%
%
%



\numberwithin{equation}{section}
\newtheorem {Theorem}{Theorem}
\numberwithin{Theorem}{section}

\newtheorem {Lemma}[Theorem]{Lemma}

\newtheorem {Proposition}[Theorem]{Proposition}

\theoremstyle{definition}

\theoremstyle{remark}
\newtheorem{Remark}[Theorem]{Remark}
\newtheorem{Example}[Theorem]{Example}





\def    \eps    {\epsilon}

\newcommand{\FF}{{\mathcal F}}

\newcommand{\CA}{{\mathcal A}}

\newcommand{\CS}{{\mathcal S}}
\newcommand{\tCS}{\tilde{\mathcal S}}

\newcommand{\gap}{\operatorname{gap}}

\newcommand{\const}{{\mathit const}}

\newcommand{\tpi}{\tilde{\pi}}

\newcommand{\tH}{\tilde{H}}

\newcommand{\tA}{\tilde{\mathcal A}}

\newcommand{\Pp}{{\mathcal P}}
\newcommand{\PP}{{\mathcal P}}

\def    \nat    {{\natural}}

\def    \R      {{\mathbb R}}

\def    \Z      {{\mathbb Z}}
\def    \N      {{\mathbb N}}
\def    \Q      {{\mathbb Q}}

\def    \T      {{\mathbb T}}
\def    \CP     {{\mathbb C}{\mathbb P}}

\def    \12     {{\frac{1}{2}}}

\def    \p      {\partial}

\def    \Sp     {\operatorname{Sp}}

\def    \HF     {\operatorname{HF}}
\def    \sign     {\operatorname{sign}}

\def    \tCF    {\widetilde{\operatorname{CF}}{}}
\def    \tHF    {\widetilde{\operatorname{HF}}{}}

\def    \H      {\operatorname{H}}
\def    \Tor    {\operatorname{Tor}}

\def    \CF     {\operatorname{CF}}

\def    \bh     {\bar{h}}
\def    \bx     {\bar{x}}
\def    \by     {\bar{y}}
\def    \bz     {\bar{z}}

\def  \MUCZ  {\operatorname{\mu_{\scriptscriptstyle{CZ}}}}

\newcommand{\ff}{{\mathfrak f}}
\newcommand{\fc}{{\mathfrak c}}
\newcommand{\fh}{{\mathfrak h}}

\def \NT {N_{\scriptscriptstyle{T}}} 

\def \NS {N_{\scriptscriptstyle{S}}}


\begin{document}


\setlength{\smallskipamount}{6pt}
\setlength{\medskipamount}{10pt}
\setlength{\bigskipamount}{16pt}





\title[Non-contractible Periodic Orbits]{Non-contractible Periodic
  Orbits in Hamiltonian Dynamics on Closed Symplectic Manifolds}

\author[Viktor Ginzburg]{Viktor L. Ginzburg}
\author[Ba\c sak G\"urel]{Ba\c sak Z. G\"urel}

\address{BG: Department of Mathematics, University of Central Florida,
  Orlando, FL 32816, USA} \email{basak.gurel@ucf.edu}

\address{VG: Department of Mathematics, UC Santa Cruz, Santa Cruz, CA
  95064, USA} \email{ginzburg@ucsc.edu}

\subjclass[2000]{53D40, 37J10} \keywords{Non-contractible periodic
  orbits, Hamiltonian flows, Floer homology, augmented action}

\date{\today} 

\thanks{The work is partially supported by the NSF grants DMS-1414685
  (BG) and DMS-1308501 (VG)}


\begin{abstract} 
  We study Hamiltonian diffeomorphisms of closed symplectic manifolds
  with non-contractible periodic orbits. In a variety of settings, we
  show that the presence of one non-contractible periodic orbit of a
  Hamiltonian diffeomorphism of a closed toroidally monotone or
  toroidally negative monotone symplectic manifold implies the
  existence of infinitely many non-contractible periodic orbits in a
  specific collection of free homotopy classes. The main new
  ingredient in the proofs of these results is a filtration of Floer
  homology by the so-called augmented action. This action is
  independent of capping, and, under favorable conditions, the
  augmented action filtration for toroidally (negative) monotone
  manifolds can play the same role as the ordinary action filtration
  for atoroidal manifolds.
\end{abstract}

\maketitle

\tableofcontents

\section{Introduction}
\label{sec:intro}

In this paper, focusing on closed symplectic manifolds, we study
Hamiltonian diffeomorphisms with non-contractible periodic orbits.  We
show that in a variety of settings the presence of one
non-contractible one-periodic orbit of a Hamiltonian diffeomorphism of
a closed symplectic manifold guarantees the existence of infinitely
many non-contractible periodic orbits.

More specifically, we concentrate on closed symplectic manifolds
$(M^{2n}, \omega)$ which are toroidally monotone or toroidally
negative monotone and Hamiltonians with at least one non-contractible
one-periodic orbit $x$. Then we prove that under minor assumptions on
the free homotopy class $\ff$ of $x$ and under certain dynamical and
Floer theoretic conditions on $x$ and $M$, the Hamiltonian has
infinitely many simple periodic orbits in the collection of free homotopy
classes $\ff^\N:=\{\ff^k\mid k\in\N\}$; see Theorems
\ref{thm:homology-hyperbolic} and \ref{thm:homology-chi}. The
conditions on $\ff$ are automatically satisfied when the homology
class $[\ff]$ is non-zero in $\H_1(M;\Z)/\Tor$ or when $\ff\neq 1$ and
$\pi_1(M)$ is hyperbolic and torsion free. These theorems partially
extend the main result of \cite{Gu:nc} from atoroidal symplectic
manifolds to toroidally monotone or negative monotone manifolds.  (We
show in the next section that there are numerous manifolds and
Hamiltonians meeting the requirements of the theorems.)  The
phenomenon we consider here is $C^\infty$-generic. To be more precise,
we show in Theorem \ref{thm:generic} that the presence of one
non-contractible periodic orbit $x$ in a class $\ff$ such that
$1\not\in \ff^\N$ implies $C^\infty$-generically the existence of
infinitely many non-contractible periodic orbits in $\ff^\N$. Finally,
we also refine the results from \cite{Gu:nc} for atoroidal symplectic
manifolds; see Theorem \ref{thm:atoro}.

All these results are manifestations of the same underlying phenomenon
that the presence of a periodic orbit which is geometrically or
homologically unnecessary forces a Hamiltonian system to have
infinitely many periodic orbits. (On a closed symplectic manifold
non-contractible periodic orbits are clearly unnecessary. For example,
a $C^2$-small autonomous Hamiltonian has only constant one-periodic
orbits.) This phenomenon of ``forced existence'' of infinitely many
periodic orbits is very general and has been observed in a variety of
other settings. For instance, the celebrated theorem of Franks,
\cite{Fr1, Fr2}, asserting that a Hamiltonian diffeomorphism (or,
even, an area preserving homeomorphism) of $S^2$ with at least three
fixed points must have infinitely many periodic orbits is a
prototypical result along these lines; see also \cite{LeC} for further
refinements and \cite{CKRTZ, Ke:JMD} for a symplectic topological
proof. Another instance is a theorem from \cite{GG:hyperbolic} that
for a certain class of closed monotone symplectic manifolds including
$\CP^n$ any Hamiltonian diffeomorphism with a hyperbolic fixed point
must necessarily have infinitely many periodic orbits.  Yet, the
specific question of forced existence for non-contractible periodic
orbits considered here is largely unexplored except for \cite{Gu:nc}
focusing on symplectically atoroidal manifolds. We refer the reader
to, e.g., \cite{Gu:hq} for some other related results and to
\cite{GG:CC-survey} for a detailed discussion of the phenomenon and
further references.

The proofs of our main theorems rely on the machinery of Floer
homology for non-contractible periodic orbits. In the course of the
last two decades, this version of Floer homology has been studied and
used in a number of papers, but usually in a more topological context
and focusing on Hamiltonians on open manifolds such as twisted or
ordinary cotangent bundles; see, e.g., \cite{BPS, BuHa, GL, Lee, Ni,
  SW, We, Xu}. (The recent works \cite{Ba15,PS} are closer to the
setting considered in this paper which on the conceptual level can be
thought of as a continuation of \cite{Gu:nc}.)  The main difficulty in
applying the technique to closed manifolds is that the global Floer
homology for non-contractible orbits vanish and, moreover, already for
closed surfaces, a Hamiltonian may have no non-contractible periodic
orbits of any period. (As a consequence, the Floer complex can be
trivial for all iterations.) Thus to infer that a Hamiltonian has a
number of periodic orbits, e.g., infinitely many, an additional input
is required. In our case, this is one non-contractible periodic orbit
of the Hamiltonian, which serves as a seed generating infinitely many
offsprings. The new periodic orbits are detected by analyzing the
change in certain filtered Floer homology groups under the iteration
of the Hamiltonian and using the ``stability of the filtered
homology''.  This argument shares many common elements with the
reasoning in, e.g., \cite{GG:gap, Gu:nc, Gu:hq}. We feel that
it can also be cast in the framework of the barcode and persistent
homology theory for Floer homology (cf.\ \cite{PS,UZ}) and it would be
interesting to see if a systematic use of this theory would lead to
new results in this class of questions.

There are three key ingredients to the proofs in this paper.

The main new component is the observation, perhaps of independent
interest, that under favorable circumstances the Floer homology for a
toroidally monotone or toroidally negative monotone manifold is
filtered by the so-called augmented action. The augmented action
$\tA_H$ is the difference $\CA_H-(\lambda/2)\Delta_H$ between the
standard symplectic action $\CA_H$ and the (renormalized) mean index
$\Delta_H$ of an orbit, where $\lambda$ is the monotonicity constant;
cf.\ \cite[Sect.\ 1.4]{GG:gaps}.  The key feature of the augmented
action is that it is independent of capping and hence is assigned to
the orbit itself.  When the augmented action gap is sufficiently
large, the Floer differential does not increase the augmented action,
and the augmented action filtration is defined.  In this case, the
filtration behaves similarly to the ordinary action filtration in the
aspherical or atoroidal case and can be used in the same way. (One
essential difference, which is a source of several complications, is
that the augmented action filtration is not strict: in general, the
Floer differential can connect orbits with equal augmented action even
if the gap is large.)

The other two ingredients are the stability of the filtered homology
already mentioned above and the ball-crossing theorem \cite[Thm.\
3.1]{GG:hyperbolic} used in one of the proofs. This theorem gives an
iteration-independent lower bound on the energy of a Floer trajectory
asymptotic to a hyperbolic orbit.

The paper is organized as follows. In Section \ref{sec:results}, we
set our conventions and notation, introduce the necessary notions,
state the main results of the paper, and discuss in detail the classes
of manifolds and Hamiltonians these results apply to. In Section
\ref{sec:augfil}, we define the augmented action filtration and
establish its key properties. Finally, in Section \ref{sec:proofs}, we
prove the main theorems of the paper.

\medskip
\noindent{\bf Acknowledgements.} The authors are grateful to Alberto
Abbondandolo, Andr\'es Koropecki, Patrice Le Calvez, Denis Osin for
useful discussions and suggestions.

\section{Main results}
\label{sec:results}

\subsection{Conventions and notation}
\label{sec:conventions}
To state the main results of the paper, let us first introduce some
relevant definitions and set our conventions and notation.

Throughout the paper, we assume that $(M^{2n}, \omega)$ is a closed
\emph{toroidally monotone} or \emph{toroidally negative monotone}
symplectic manifold unless specifically stated otherwise. To be more
specific, recall that a cohomology class $w$ is \emph{atoroidal} if
for every map $v\colon \T^2\to M$ the integral of $w$ over $v$
vanishes: $\left< w, [v] \right>=0$. A symplectic manifold
$(M,\omega)$ is said to be toroidally monotone (resp., toroidally
negative monotone) when for some constant $\lambda\geq 0$ (resp.,
$\lambda<0$) the class $w=[\omega]-\lambda c_1(TM)$ is atoroidal. The
constant $\lambda$ is referred to as the \emph{toroidal monotonicity
  constant}.  Note that the case $\lambda=0$ corresponds to an
atoroidal class $[\omega]$.  Toroidally (negative) monotone manifolds
are automatically spherically (negative) monotone. We refer the reader
to Section \ref{sec:res-homology} for examples of toroidally monotone
or negative monotone manifolds. We call the positive generator $\NT$
of the group generated by the integrals $\left< c_1(TM), [v] \right>$
for all tori the \emph{minimal toroidal first Chern number} of $M$. We
set $\NT=\infty$ when this group is $\{0\}$, i.e., $c_1(TM)$ is
atoroidal. For a toroidally monotone or negative monotone manifold,
this implies that $[\omega]$ is also atoroidal.

We denote by $\tpi_1(M)$ the set of homotopy classes of free loops in
$M$. The free homotopy class of a loop $x$ and its integer homology
class (modulo torsion) are denoted by $\llbracket x \rrbracket$ and,
respectively, by $[x]$. Likewise, we write $[\ff]\in \H_1(M;\Z)/\Tor$
for the homology class modulo torsion of a free homotopy class
$\ff\in \tpi_1(M)$.

All Hamiltonians $H$ are assumed to be one-periodic in time, i.e.,
$H\colon S^1\times M\to\R$, and we set $H_t = H(t,\cdot)$ for
$t\in S^1=\R/\Z$.  The Hamiltonian vector field $X_H$ of $H$ is
defined by $i_{X_H}\omega=-dH$. The (time-dependent) flow of $X_H$ is
denoted by $\varphi_H^t$ and its time-one map by $\varphi_H$. Such
time-one maps are called \emph{Hamiltonian diffeomorphisms}. For the
sake of brevity, we will refer to the periodic orbits of $\varphi_H$
or, equivalently, the periodic orbits of $\varphi_H^t$ with integer
period as the periodic orbits of $H$. For a Hamiltonian $H$ and a
collection of free homotopy classes $\fc\subset \tpi_1(M)$, we set
$\PP_k(H, \fc)$ to be the set of $k$-periodic orbits of $H$ in
$\fc$. For instance, $\PP_k(H, \gamma)$, where
$\gamma \in \H_1(M;\Z)/\Tor$, comprises the $k$-periodic orbits of $H$
in the homology class~$\gamma$.

For a class $\ff\in \tpi_1(M)$, let us fix a reference loop
$z\in \ff$.  A capping of $x \colon S^1 \to M$ with free homotopy
class $\ff$ is a cylinder (i.e., a homotopy)
$\Pi \colon [0,1] \times S^1 \to M$ connecting $x$ and $z$ taken up to
a certain equivalence relation. Namely, two cappings $\Pi$ and $\Pi'$
are equivalent if the integral of $c_1(TM)$, and hence of $\omega$,
over the torus obtained by attaching $\Pi'$ to $\Pi$ is equal to zero.

The \emph{action} of $H$ on a capped loop $\bx=(x,\Pi)$ is
$$
\CA_H(\bx)=
-\int_\Pi\omega+\int_{S^1} H_t(x(t))\,dt.
$$
Clearly, $\CA_H(\bx)$ is well defined. Moreover, the critical points
of $\CA_H$ are exactly the capped one-periodic orbits of $H$ in the
homotopy class $\ff$.  The action spectrum $\CS(H,\ff)$ is the set of
critical values of $\CA_H$. It has zero measure; see, e.g.,~\cite{HZ}.

Furthermore, let us fix a trivialization of $TM$ along the reference
loop $z$. Then, to a capped one-periodic orbit $\bx$ with $x\in\ff$,
one can associate the \emph{mean index} $\Delta_H(\bx)$ in a standard
way. Namely, we extend the trivialization of $TM|_z$ to the capping of
$x$ and then use the resulting trivialization of $TM|_x$ to turn the
linearized flow $d\varphi_H^t|_x$ along $x$ into a path in the group
$\Sp(2n)$. The mean index $\Delta_H(\bx)$ is by definition the mean
index of the resulting path; see, e.g., \cite{Lo,SZ}. The mean index
measures, roughly speaking, the total rotation number of certain unit
eigenvalues of the linearized flow along $x$. The mean index
$\Delta_H(x)$ of a non-capped orbit $x$ is well defined as an element
of $\R/2\NT\Z$.

By analogy with the case of contractible orbits (see \cite{GG:gaps}),
we define the \emph{augmented action} of a one-periodic orbit $x$ to
be
$$
\tA_H(x)=\CA_H(\bx)-\frac{\lambda}{2}\Delta_H(\bx).
$$
The action and the mean index change under recapping in the same way,
up to the factor $\lambda/2$, and hence the augmented action of $x$ is
well defined, i.e., independent of the capping. Note, however, that
the augmented action depends on the choices of the reference loop $z$
and the trivialization. When $\lambda=0$, i.e., $[\omega]$ is
atoroidal the augmented action turns into the ordinary action.

The \emph{augmented action spectrum} $\tCS(H,\ff)$ is the collection of
the augmented action values for all one-periodic orbits of $H$ in the
class $\ff$, i.e.,
$$
\tCS(H,\ff) = \{ \tA_H(x) \mid x \in \PP_1(H, \ff)\}.
$$
In contrast with the ordinary action spectrum, $\tCS(H,\ff)$ need not
have zero measure -- in fact, it can contain whole intervals --
unless, of course, $H$ has finitely many periodic orbits in
$\ff$. However, $\tCS(H,\ff)$ is a compact set, which depends
continuously on $H$. To be more precise, as is easy to see, for any
open set $U\supset \tCS(H,\ff)$, we have $U\supset \tCS(K,\ff)$ when
$K$ is sufficiently $C^1$-close to $H$.

Assume now that all one-periodic orbits of $H$ in the class $\ff$ with
augmented action in an open interval $I$ are isolated. Then we set
$\chi(H,I,\ff)$ to be the sum of the Poincar\'e-Hopf indices of their
return maps. This definition extends by continuity to all Hamiltonians
$H$ with possibly non-isolated orbits as long as the end points of $I$
are outside $\tCS(H,\ff)$. For instance, $\chi(H,I,\ff)\neq 0$ when
$I\cap \tCS(H,\ff)$ contains only one point $a$ and $H$ is
non-degenerate and has an odd number of one-periodic orbits (e.g.,
one) in the class $\ff$ with augmented action $a$.

The above definitions generalize in an obvious way to the setting
where one-periodic orbits are replaced by $k$-periodic orbits or where
one class $\ff\in\tpi_1(M)$ is replaced by a collection of classes
$\fc\subset \tpi_1(M)$. In the latter case, a reference loop $z$ and a
trivialization of $TM|_z$ must be fixed for every $\ff\in\fc$.

\subsection{Results} 
\label{sec:res-homology}
Now we are in a position to state our main results.  We recall that
\emph{all manifolds considered in this paper are assumed to be
  toroidally monotone or toroidally negative monotone unless
  specifically stated otherwise}.  Furthermore, since the proofs
heavily depend on Hamiltonian Floer homology, we need to either assume
throughout the paper that $M$ is weakly monotone or rely on the
machinery of virtual cycles. To be more precise, recall that a
manifold $M^{2n}$ is said to be weakly monotone if one of the
following conditions is satisfied: $M$ is (not necessarily strictly)
monotone, i.e., $[\omega]|_{\pi_2(M)}=\lambda c_1(TM)|_{\pi_2(M)}$
with $\lambda\geq 0$, or $c_1(TM)|_{\pi_2(M)}=0$, or $N\geq n-2$ where
$N$ is the minimal Chern number. Under any of these conditions, the
Floer homology is defined and has standard properties; see \cite{HS,
  MS, Ono} and references therein. We refer the reader to, e.g.,
\cite{FO, HWZ, LT, Pa} for various incarnations of the technique of
virtual cycles. In all but one of our results (Theorem
\ref{thm:generic}) the ambient manifold $M$ is automatically monotone
(not strictly), and hence weakly monotone.

Our first result asserts that under certain additional assumptions on
$M$ the presence of one non-contractible hyperbolic one-periodic orbit
implies the existence of infinitely many non-contractible periodic
orbits; cf.\ \cite{GG:hyperbolic}.

\begin{Theorem}
\label{thm:homology-hyperbolic}
Assume that $\NT\geq n/2+1$, where $2n=\dim M$, and that a Hamiltonian
$H$ on $M$ has a hyperbolic one-periodic orbit $x$ such that all
homotopy classes in the set
$\llbracket x \rrbracket^\N=\{\llbracket x \rrbracket^k\mid k\in\N\}$
are distinct and nontrivial. (This is the case, e.g., when $[x]\neq 0$
in $\H_1(M;\Z)/\Tor$.)  Then $H$ has infinitely many simple periodic
orbits with homotopy class in $\llbracket x \rrbracket^\N$. In
particular, if all such orbits are isolated, there are simple
non-contractible periodic orbits of arbitrarily large period.
\end{Theorem}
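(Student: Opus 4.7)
The plan is to argue by contradiction: assume $H$ has only finitely many simple periodic orbits in homotopy classes from $\llbracket x \rrbracket^{\N}$, say $z_1 = x, z_2, \ldots, z_N$ with minimal periods $p_j$ and classes $\llbracket z_j\rrbracket = \ff^{m_j}$, where $\ff := \llbracket x \rrbracket$. First I would carry out a combinatorial reduction: the $l$-th iterate of $z_j$ has period $p_j l$ and class $\ff^{m_j l}$, so for it to be a $k$-periodic orbit in $\ff^k$ one needs $l = k/p_j$ and $m_j = p_j$, hence $p_j \mid k$. Taking $k$ prime and larger than $\max_j p_j$ forces $p_j = 1$, so the set of $k$-periodic orbits of $H$ in $\ff^k$ is exactly $\{y_1^k,\ldots,y_M^k\}$, where $y_1 = x,\ldots,y_M$ enumerate the one-periodic orbits of $H$ in class $\ff$. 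Their augmented actions form the finite set $\{k a_i\}$ with $a_i := \tA_H(y_i)$.

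The engine of the argument is the augmented action filtration from Section \ref{sec:augfil} combined with the ball-crossing theorem \cite[Thm.~3.1]{GG:hyperbolic}. The latter furnishes a constant $\eps_0 > 0$, independent of $k$, such that any Floer trajectory of $H^{\natural k}$ asymptotic to $x^k$ has energy at least $\eps_0$. The Floer energy identity together with the bound $|\MUCZ - \Delta|\leq n$ turns this into a uniform lower bound on the augmented action jump along any differential incident to $x^k$, up to an additive correction $C(\lambda, n)$ depending only on $\lambda$ and $n$. Since there are only finitely many $a_i$, the nonzero differences $|a_i - a_1|$ have a positive minimum, and hence for $k$ sufficiently large the window $(ka_1 - \delta, \, ka_1 + \delta)$, with $\delta$ chosen just above $C(\lambda,n)$, isolates the level $ka_1$ in the augmented action spectrum: the only generators inside are $\{y_i^k : a_i = a_1\}$.

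Now hyperbolicity enters: $x^k$ is hyperbolic, so $\HFL(x^k)$ is one-dimensional and concentrated in degree $k\MUCZ(x)$, and by stability of the augmented-action-filtered Floer homology this contribution must be reflected in $\HF^{(ka_1 - \delta,\, ka_1 + \delta)}(H^{\natural k}, \ff^k)$. Since the global Floer homology in the nontrivial class $\ff^k$ vanishes, the class of $x^k$ must eventually be killed by a Floer differential. The ball-crossing bound forbids this differential from crossing the window boundary, and a mean-index argument using hyperbolicity and the bound $|\MUCZ-\Delta|\leq n$ shows that for $k$ large none of the finitely many $y_i^k$ with $a_i = a_1$ inside the window can cancel $x^k$ either: their degrees $k\Delta_H(y_i) + O(1)$ either match $k\MUCZ(x)$, where the energy bound and hyperbolicity block cancellation, or differ from it by $\Omega(k)$, where cancellation is impossible on degree grounds. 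This yields a contradiction for every sufficiently large prime $k$, producing new simple periodic orbits and hence infinitely many in $\ff^\N$.

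The step I expect to be the main obstacle is the nonstrictness of the augmented action filtration: the Floer differential can connect generators of equal augmented action, so isolating $x^k$ at the level $ka_1$ requires combining the ball-crossing energy bound with careful index bookkeeping on the generators $y_i^k$ with $a_i = a_1$. The hypothesis $\NT \geq n/2 + 1$ enters precisely here, keeping capping-induced mean index ambiguities sufficiently controlled so that the degree analysis inside the window remains valid for all large $k$, and ensuring that the Floer-theoretic input from Section \ref{sec:augfil} applies in the toroidally (negative) monotone setting without requiring virtual cycle technology.
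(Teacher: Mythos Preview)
Your mean-index argument has a genuine gap. You claim that when $\Delta_H(y_i)\neq\Delta_H(x)$ the degrees of $y_i^k$ and $x^k$ differ by $\Omega(k)$, but the Floer complex $\CF(H^{\nat k},\ff^k)$ is only $\Z/2\NT\Z$-graded: recapping shifts the mean index by arbitrary multiples of $2\NT$, so for infinitely many $k$ some capping of $y_i^k$ can have Conley--Zehnder index within $\pm 1$ of that of $\bx^k$, and your degree obstruction evaporates. The paper resolves this with a Kronecker-type recurrence step you are missing. After replacing $H$ by $F=H^{\nat 2k_0}$ (the factor $2$ ensuring the hyperbolic orbit $h$ has an even number of eigenvalues in $(-1,0)$ so one can normalize $\Delta_F(\bh)=0$; the lcm $k_0$ making all the finitely many orbits one-periodic so the argument runs over all integers, not just primes), one passes to a subsequence $k_i$ along which \emph{every} $k_i\Delta_F(y_j)$ lies within $\eps$ of $0$ in $\R/2\NT\Z$. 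Then for \emph{any} capping $\bz$ of a non-degenerate perturbation of $y_j^{k_i}$ with $\tA\approx 0$, either $|\Delta(\bz)|<\eps$---which forces $|\CA(\bz)|<\eps+\eta$ and contradicts the ball-crossing bound $|\CA(\bz)|>e>\eps$---or $|\Delta(\bz)|>2\NT-\eps\geq n+2-\eps>n+1$, ruling out relative index $\pm 1$. This dichotomy is exactly where $\NT\geq n/2+1$ enters, and it does not follow from your sketch.

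Your endgame also diverges from the paper's and has its own gap. You invoke $\HF(H^{\nat k},\ff^k)=0$ to force a differential cancelling $x^k$ in the total complex, then assert that the ball-crossing bound ``forbids this differential from crossing the window boundary.'' But ball-crossing bounds the \emph{energy} (ordinary action difference), not the augmented action; the augmented-action filtration only prevents $\partial$ from \emph{increasing} $\tA$ by more than $c_0(M)$, so a differential from $x^k$ to a generator $y_i^k$ with $a_i<a_1$ is not excluded by your window. The paper never uses the vanishing of global homology. Instead, having shown $\tHF^{k_iI}(F^{\nat k_i},\fh^{k_i})\neq 0$ via the lemma above, it runs continuation maps to $F^{\nat(k_i+1)}$ and back (Proposition~\ref{prop:homotopy}), producing a $(k_i{+}1)$-periodic orbit of $F$ in the class $\fh^{k_i}$; since this orbit must be some $y_j^{k_i+1}$ with $\llbracket y_j\rrbracket=\ff^b$ and $\fh=\ff^a$, one obtains $ak_i=b(k_i{+}1)$ with $a$ fixed, a purely combinatorial contradiction for large $k_i$.
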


The condition that $\NT\geq n/2+1$ appears to be purely technical even
though it plays an essential role in the proof. As we will show below,
there are numerous symplectic manifolds $M$ and Hamiltonians $H$
meeting the requirements of the theorem. For instance, the theorem
applies to $M=\Sigma_g\times \CP^m$, where $\Sigma_g$ is a closed
surface of genus $g\geq 2$.

The second result is more accurate and covers a broader range of
manifolds and maps, although it still relies on some additional
topological assumptions about the flow of $H$; cf.\ \cite{Gu:nc}.

\begin{Theorem}
\label{thm:homology-chi}
Assume that $\PP_1(H,[\ff])$ is finite and $\chi(H,I,\ff)\neq 0$ for
some interval $I$ with end points outside $\tCS(H,\ff)$, where
$\ff\in\tpi_1(M)$ and $[\ff] \neq 0$ in $\H_1(M;\Z)/\Tor$.  Then, for
every sufficiently large prime $p$, the Hamiltonian $H$ has a simple
periodic orbit in the homotopy class $\ff^p$ and with period either
$p$ or $p'$, where $p'$ is the first prime greater than $p$. Moreover,
when $\pi_1(M)$ is hyperbolic and torsion free, the condition
$[\ff]\neq 0$ can be replaced by $\ff\neq 1$ and no finiteness
requirement is needed.
\end{Theorem}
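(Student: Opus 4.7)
The plan is to use the augmented action filtration on Floer homology of iterated Hamiltonians developed in Section~\ref{sec:augfil}, combined with the iteration invariance of local Floer homology and the vanishing of global Floer homology in nontrivial free homotopy classes.

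\textbf{Structural dichotomy.} Since $[\ff] \neq 0$ in $\H_1(M;\Z)/\Tor$, the classes $\ff^k$ are distinct and nontrivial for $k \geq 1$. Any $p$-periodic orbit of $H$ in class $\ff^p$ has minimal period $1$ or $p$; in the former case the underlying simple orbit lies in some class $\fg$ with $\fg^p = \ff^p$, forcing $[\fg] = [\ff]$ in $\H_1(M;\Z)/\Tor$ by torsion-freeness, and by finiteness of $\PP_1(H, [\ff])$ there are only finitely many such orbits. A $p'$-periodic orbit in class $\ff^p$ must be simple of period $p'$, since $\fg^{p'} = \ff^p$ in $\H_1(M;\Z)/\Tor$ would force $p' \mid p$, impossible for distinct primes.

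\textbf{Iterated filtered Floer homology.} For $p$ a sufficiently large prime, the $p$-th iterates $x^{(p)}$ of the relevant orbits from the previous step are admissible (no bad iterations), so local Floer homology passes to iterates with predictable index shifts. The augmented action satisfies $\tA_{H^{\natural p}}(x^{(p)}) = p\tA_H(x)$, so pairwise gaps among iterates grow linearly with $p$ and eventually exceed the threshold of Section~\ref{sec:augfil}, making the augmented action filtration on $\CF(H^{\natural p}, \ff^p)$ well-defined. If no simple $p$-periodic orbit in $\ff^p$ exists, these iterates exhaust $\PP_1(H^{\natural p}, \ff^p)$, and the $\chi$ hypothesis combined with iteration invariance yields a filtered piece supported on the iterates with total Poincar\'e--Hopf index $\pm \chi(H, I, \ff) \neq 0$ in the window $pI$, which is therefore nontrivial.

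\textbf{Contradiction.} Assume neither a simple $p$- nor a simple $p'$-periodic orbit in $\ff^p$ exists. By the structural dichotomy, $\PP_1(H^{\natural p'}, \ff^p) = \emptyset$, so the Floer complex for $H^{\natural p'}$ in class $\ff^p$ vanishes at the chain level and all its filtered pieces are zero. Combining the nontriviality of the filtered piece $\HF^{\tA \in pI}(H^{\natural p}, \ff^p)$ with the global vanishing $\HF(H^{\natural p}, \ff^p) = 0$, and using stability of filtered Floer homology under continuation between $H^{\natural p}$ and $H^{\natural p'}$ (carefully tracking how augmented action levels shift), the nontriviality must survive in a form incompatible with the chain-level emptiness for $H^{\natural p'}$, yielding the contradiction.

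\textbf{Torsion-free hyperbolic addendum.} When $\pi_1(M)$ is hyperbolic and torsion-free, roots of nontrivial elements are unique up to conjugacy, so $\fg^p = \ff^p$ forces $\fg$ conjugate to $\ff$, and $\fg^{p'} = \ff^p$ with distinct primes is impossible since centralizers of nontrivial elements are cyclic. The finiteness hypothesis on $\PP_1(H, [\ff])$ becomes unnecessary because the set of orbits relevant to the argument is already controlled purely by the conjugacy class structure.

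\textbf{Main obstacle.} The hard part is the third step: converting the structural emptiness for $H^{\natural p'}$ together with the nonvanishing filtered homology for $H^{\natural p}$ into an actual contradiction. The non-strictness of the augmented action filtration (the differential may connect orbits with equal augmented action) and the absence of a small-Hofer-norm path between $H^{\natural p}$ and $H^{\natural p'}$ make the required continuation and stability arguments technically delicate, and the bulk of the proof lies in making them precise.
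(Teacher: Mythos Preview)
Your outline matches the paper's approach: augmented-action-filtered Floer homology of $H^{\nat p}$ in the class $\ff^p$, nonvanishing of $\tHF^{pI}$ via the $\chi$ hypothesis and iteration-invariance of Poincar\'e--Hopf indices, and a continuation triangle between $H^{\nat p}$ and $H^{\nat p'}$ forcing $\tHF^{pI+a_+}(H^{\nat p'},\ff^p)\neq 0$, which contradicts the emptiness of $\PP_1(H^{\nat p'},\ff^p)$.

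There is, however, one concrete missing ingredient that resolves exactly the obstacle you flag. The continuation shift for the linear homotopy from $H^{\nat p}$ to $H^{\nat p'}$ is bounded by $(p'-p)\,e_\pm + |\lambda|n$, where $e_\pm$ are Hofer-type quantities depending only on $H$. For the triangle argument to close (so that the diagonal quotient-inclusion map is an isomorphism and the gap conditions hold for both Hamiltonians), you need this shift to be small compared to $p\gap(H,\ff)$ and to $p\delta$, where $\delta$ is the distance from $\partial I$ to $\tCS(H,\ff)$. This is guaranteed by the number-theoretic input $p'-p=o(p)$ (any prime-gap bound of Baker--Harman--Pintz type suffices). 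Without it your worry about ``absence of a small-Hofer-norm path'' is legitimate; with it, the obstacle disappears.

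A smaller issue in your structural dichotomy: from $\fg^p=\ff^p$ you only deduce $[\fg]=[\ff]$, but the homogeneity $\tA_{H^{\nat p}}(x^p)=p\,\tA_H(x)$ and equations like $\tCS(H^{\nat p},\ff^p)=p\,\tCS(H,\ff)$ require $\fg=\ff$ as free homotopy classes (the reference loop for $\ff^p$ is the iterate of that for $\ff$). The paper closes this by observing that only finitely many free homotopy classes $\ff_i$ occur among orbits in $\PP_1(H,[\ff])$, and that for any two distinct conjugacy classes at most one prime $p$ can satisfy $\ff_i^p=\ff_j^p$; hence for all large primes the $\ff_i^p$ are distinct and $\fg^p=\ff^p$ forces $\fg=\ff$. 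In the hyperbolic torsion-free case this step is replaced by the fact that every nontrivial element lies in a unique maximal cyclic subgroup, so $\ff^p=\fh^q$ for large primes forces $\fh=\ff$ and $p=q$ directly---this is also why the finiteness hypothesis can be dropped there (and the $\chi$-invariance under iteration is then argued via a non-degenerate perturbation rather than Shub--Sullivan).
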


\begin{Remark}
  We emphasize that in these theorems we impose no non-degeneracy
  conditions on $H$. An interesting new point in the second part of
  Theorem \ref{thm:homology-chi} is that, in contrast with many other
  results of this type, we do not need to require $\PP_1(H,\ff)$ to be
  finite to have simple periodic orbits with arbitrarily large period.
  It is immediate to see that, if $H$ is non-degenerate,
  $\chi(H,I,\ff)\neq 0$ for a short interval $I$ centered at
  $a\in\tCS(H,\ff)$ when the one-periodic orbits of $H$ in the class
  $\ff$ have different augmented action values.

  Also note that by passing to an iteration one can replace in both of
  the theorems one-periodic orbits by $k$-periodic. Then the first
  theorem remains correct as stated. In the second theorem, after
  replacing $H$ by the iterated Hamiltonian $H^{\nat k}$ (see Section
  \ref{sec:prelim}), we can only conclude that $H$ has infinitely many
  simple periodic orbits in $\ff^\N$. (Of course, the theorem still
  applies literally to $H^{\nat k}$ in place of $H$, but simple
  periodic orbits of $H^{\nat k}$ are not necessarily simple as
  periodic orbits of $H$.)
\end{Remark}

Let us now further discuss the conditions of Theorems
\ref{thm:homology-hyperbolic} and \ref{thm:homology-chi}, beginning
with those concerning the manifold and then moving on to the
Hamiltonian.

The manifolds $M$ meeting the requirements of these theorems exist in
abundance.  To construct specific examples, let us start with a
symplectically atoroidal manifold $(M_1,\omega_1)$, i.e., a closed
symplectic manifold such that $[\omega_1]$ and $c_1(TM_1)$ are
atoroidal. Among these are, for instance, surfaces of genus $g\geq 2$
and, more generally, all K\"ahler manifolds with negative sectional
curvature. (See, e.g., \cite{Gu:nc} for further references and a
discussion of such manifolds.)  Next, let $(M_0,\omega_0)$ be a closed
spherically monotone or negative monotone symplectic manifold. There
are numerous examples of such manifolds including, in the negative
monotone case, those with arbitrarily large minimal (spherical) Chern
number $\NS$. For instance, let $M_0$ be a smooth complete
intersection in $\CP^{m+k}$ given by $k$ homogeneous polynomials of
degrees $d_1,\ldots,d_k$. Then $M_0$ is monotone or negative monotone
with $\NS=|m+k-d|$, where $d=d_1+\ldots+d_k$, unless $\NS=0$ (see,
e.g., \cite[p.\ 88]{LM}). To be more precise, $M_0$ is monotone when
$m+k-d>0$, negative monotone when $m+k-d<0$, and $c_1(TM_0)=0$ when
$m+k-d=0$. Now it is easy to see that $M=M_0\times M_1$ is toroidally
monotone or toroidally negative monotone (when $\NS\neq 0$) with
$\NT=\NS$.

Furthermore, $\pi_1(M)=\pi_1(M_0)\times \pi_1(M_1)$. In particular,
$\H_1(M;\Z)/\Tor\neq 0$ when $\H_1(M_1;\Z)/\Tor\neq 0$. Moreover, when
$M_0$ is a complete intersection and $M_1$ is a K\"ahler manifold with
negative sectional curvature, $\pi_1(M)$ is hyperbolic and torsion
free. Indeed, in this case, $\pi_1(M)=\pi_1(M_1)$ since complete
intersections are simply connected (see, e.g., \cite[Chap.\ IX, Sect.\
4.1]{Sha}).

Next, note that for any symplectic manifold $M$ and $\ff\in \tpi_1(M)$
there exists a Hamiltonian $H\colon S^1\times M\to \R$ with a
hyperbolic one-periodic orbit in the class $\ff$; see, e.g.,
\cite[Prop.\ 1.3]{Ba}. (In fact, one can prescribe arbitrarily a
periodic orbit and the linearization of the flow along it.) Thus,
whenever $M$ satisfies the conditions of Theorem
\ref{thm:homology-hyperbolic} and $\H_1(M;\Z)/\Tor\neq 0$ or
$\pi_1(M)$ is hyperbolic and torsion free, there exists a
$C^\infty$-open, non-empty set of Hamiltonians this theorem applies
to. Furthermore, in the setting of Theorem \ref{thm:homology-chi},
the collection of Hamiltonians with one-periodic orbits in $\ff$ is
non-empty and, as one can easily see, has a locally non-empty
interior, i.e., the intersection of this collection with a
neighborhood of its any point has a non-empty interior.  It is also
not hard to show that the Hamiltonians meeting the requirements of the
theorem form a $C^\infty$-open and dense subset in this
collection. (Indeed, non-degenerate Hamiltonians form a
$C^\infty$-open and dense subset, and one can further ensure by a
$C^\infty$-small perturbation of $H$ that all one-periodic orbits have
distinct augmented actions.)

It is worth pointing out that it is not clear how large this open
subset is, i.e., how common the Hamiltonians with at least one
non-contractible periodic orbit are. This is an interesting question
and to the best of our understanding very little is known about this
problem. Consider, for instance, time-dependent Hamiltonians on a
closed surface $M$ of positive genus. Then a bump function with small
support or, as Paul Seidel pointed out to us, a self-indexing Morse
function are examples of Hamiltonians without non-contractible
periodic orbits of any period. Furthermore, a simple KAM argument
shows that for a $C^\infty$-generic autonomous Hamiltonian $H$ on
$M=\T^2$ such that one of the components of the level $\{H=0\}$ is a
parallel, no Hamiltonian sufficiently $C^\infty$-close to $H$ has
periodic orbits in the free homotopy class collinear to the class of
the meridian. (This observation is due to Leonid Polterovich.)
However, as far as we know there are no counterexamples to the
conjecture that a $C^1$-generic (or even $C^\infty$-generic)
Hamiltonian on $M$ has a non-contractible periodic orbit.  In fact, as
has been pointed out to us by Patrice Le Calvez and Andr\'es
Koropecki, the conjecture holds for $M=\T^2$ for $C^\infty$-generic
Hamiltonians, \cite[Prop.\ J]{LCT}. (See also \cite{Ta} for some
possibly relevant results.)

Our next theorem is a minor refinement of \cite[Thm.\
1.1]{Gu:nc}. This is a result stronger than Theorems
\ref{thm:homology-hyperbolic} and \ref{thm:homology-chi}, but
applicable to a much more narrow class of manifolds.

\begin{Theorem}
\label{thm:atoro}
Assume that the class $[\omega]$ is atoroidal and let $H$ be a
Hamiltonian having a non-degenerate one-periodic orbit $x$ with
homotopy class $\ff$ such that $[\ff] \neq 0$ in $\H_1(M;\Z)/\Tor$ and
$\Pp_1(H,[\ff])$ is finite.  Then, for every sufficiently large prime
$p$, the Hamiltonian $H$ has a simple periodic orbit in the homotopy
class $\ff^p$ and with period either $p$ or $p'$, where $p'$ is the
first prime greater than $p$. Moreover, when $\pi_1(M)$ is hyperbolic
and torsion free, the condition $[\ff]\neq 0$ can be replaced by
$\ff\neq 1$.
\end{Theorem}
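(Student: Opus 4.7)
The plan is to specialize the Floer-theoretic strategy of \cite[Thm.~1.1]{Gu:nc} to the present setting, noting that the atoroidal hypothesis makes the ordinary action $\CA_H$ well-defined on free loops (capping ambiguities vanish), while the non-degeneracy of $x$ automatically yields $\chi(H,I,\ff) \neq 0$ for any sufficiently short interval $I$ centered at $\CA_H(x)$. In this sense Theorem \ref{thm:atoro} sits inside the framework of Theorem \ref{thm:homology-chi}; stating it separately is justified because the atoroidal condition permits a cleaner proof without invoking the augmented action filtration from Section \ref{sec:augfil}.

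Arguing by contradiction, suppose that for infinitely many primes $p$ the Hamiltonian $H$ has no simple periodic orbit of period $p$ or $p'$ (the next prime) lying in $\ff^p$. For each such $p$, since $p$ is prime, every one-periodic orbit of $H^{\nat p}$ in $\ff^p$ is the $p$-th iterate $z^p$ of a one-periodic orbit $z$ of $H$. The constraint $\llbracket z \rrbracket^p = \ff^p$ on the free homotopy class of $z$, combined with the hypothesis ($[\ff] \neq 0$ in $\H_1(M;\Z)/\Tor$, or $\pi_1(M)$ torsion-free hyperbolic with $\ff \neq 1$), forces $[z]=[\ff]$, respectively $\llbracket z \rrbracket = \ff$. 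The hyperbolic case uses the standard fact that in a torsion-free hyperbolic group, if $a^p$ and $b^p$ are conjugate then $a$ and $b$ are conjugate: both lie in the centralizer of $a^p$, which is cyclic, and the torsion-free assumption then gives equality of the exponents. Either way, $z \in \PP_1(H,[\ff])$, a finite set by hypothesis. Moreover, the non-degeneracy of $x$ implies that $x^p$ is non-degenerate as a one-periodic orbit of $H^{\nat p}$ for all but finitely many primes $p$, since only finitely many primes can make some eigenvalue of the linearized return map at $x$ a primitive $p$-th root of unity.

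Now pick $\epsilon > 0$ smaller than the minimal spacing of distinct values of $\CA_H$ on $\PP_1(H, [\ff])$, and set $I_p = (p(\CA_H(x) - \epsilon),\, p(\CA_H(x) + \epsilon))$. For every large prime $p$, the generators of the filtered complex $\CF^{I_p}(H^{\nat p}, \ff^p)$ are exclusively iterates $z^p$ with $\CA_H(z) = \CA_H(x)$, and $x^p$ itself contributes $\pm 1$ to $\chi(H^{\nat p}, I_p, \ff^p)$ via its one-dimensional local Floer homology. One then compares the filtered Floer homologies at periods $p$ and $p'$, exploiting (i) the vanishing $\HF(H^{\nat p}, \ff^p) = 0$, which holds because $\ff^p$ is non-trivial and hence the Floer complex of any $C^2$-small autonomous representative in this class is zero, forcing cancellations of generators across action windows, together with (ii) the stability of filtered Floer homology under $C^1$-small perturbations. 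The hard part is this final comparison: one must reconcile the linear-in-$p$ scaling of the action windows $I_p$ with the relatively small prime gap $p'-p$ to extract a discrepancy that can only be accounted for by a new simple periodic orbit of period $p$ or $p'$ in class $\ff^p$. This is the content of \cite[Sect.~3]{Gu:nc} and carries over essentially verbatim; the only genuine addition is the group-theoretic observation above, which handles the torsion-free hyperbolic $\pi_1$ case in parallel with the homological one.
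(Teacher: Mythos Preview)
Your overall strategy matches the paper's: exploit the ordinary action filtration (well-defined since $[\omega]$ is atoroidal), show the filtered Floer homology in a window around $p\,\CA_H(x)$ is nonzero for large primes $p$, and run the continuation-map comparison between periods $p$ and $p'$ as in the proof of Theorem~\ref{thm:homology-chi} (or \cite{Gu:nc}). The paper in fact subsumes Theorem~\ref{thm:atoro} under Theorem~\ref{thm:atoro2} and only sketches the argument.

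There is, however, a genuine gap. You claim that ``the non-degeneracy of $x$ automatically yields $\chi(H,I,\ff)\neq 0$ for any sufficiently short interval $I$ centered at $\CA_H(x)$,'' and later that ``$x^p$ itself contributes $\pm 1$ to $\chi(H^{\nat p},I_p,\ff^p)$.'' Neither gives $\chi\neq 0$: nothing rules out other (possibly degenerate) one-periodic orbits of $H$ in $\ff$ sharing the action value $\CA_H(x)$, and their indices may cancel that of $x$. The paper makes exactly this point in the remark following Theorem~\ref{thm:atoro}: a non-degenerate Hamiltonian can have $\chi(H,I,\ff)=0$ for every $I$, so Theorem~\ref{thm:atoro} does \emph{not} sit inside the framework of Theorem~\ref{thm:homology-chi}. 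What the atoroidal hypothesis actually buys is that the action filtration is \emph{strict}: the Floer differential strictly decreases $\CA_H$, hence orbits with equal action are never connected. Consequently, for small $I$ around $\CA_H(x)$ the group $\HF^I(H,\ff)$ contains the local Floer homology $\HF(x)\neq 0$ as a direct summand, and persistence of local Floer homology under admissible iteration (\cite{GG:gaps}) then yields $\HF^{pI}(H^{\nat p},\ff^p)\neq 0$ directly, with no appeal to $\chi$.

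Two smaller points. First, your centralizer argument in the hyperbolic case handles only equal primes ($a^p$ conjugate to $b^p$), whereas the final step---showing the $p'$-periodic orbit in $\ff^p$ is simple---requires the mixed-prime statement $\ff^p=\fh^{p'}\Rightarrow p=p'$ for large primes; the same cyclic-centralizer idea works (writing $a=c^m$ in $E(a)\cong\Z$, the relation $mp=np'$ forces $p'\mid m$, impossible once $p'>|m|$), but it should be stated. Second, the vanishing of the total homology $\HF(H^{\nat p},\ff^p)$ is not what drives the contradiction; the argument goes through the commutative triangle of continuation maps exactly as in Section~\ref{sec:prfs1}, and ``cancellations across action windows'' play no role.
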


\begin{Remark}
  Here, as in \cite[Thm.\ 3.1]{Gu:nc}, the requirement that $x$ is
  non-degenerate can be replaced by a much less restrictive condition
  that $x$ is isolated and has non-trivial local Floer homology; see
  Theorem \ref{thm:atoro2}. The key difference between Theorems
  \ref{thm:atoro} and \ref{thm:homology-chi} is that a non-degenerate
  Hamiltonian $H$ can, at least hypothetically, have several
  one-periodic orbits in the class $\ff$, yet $\chi(H,I,\ff) =0$ for
  any interval $I$.  (The reason why in Theorem
  \ref{thm:homology-chi}, in contrast with Theorem \ref{thm:atoro} or
  \cite{Gu:nc}, it is not sufficient to assume that $H$ has a
  non-contractible periodic orbit is that, as has already been
  mentioned in the introduction, the augmented action filtration is
  not strict. We will come back to this issue in Section
  \ref{sec:filtration}.)  The new points of Theorem \ref{thm:atoro} as
  compared with the results from \cite{Gu:nc} are the ``moreover''
  part of the theorem and the control of the homotopy classes of the
  orbits rather than the homology classes.
  \end{Remark}

  Among symplectic manifolds with atoroidal class $[\omega]$ are the
  K\"ahler manifolds with negative sectional curvature
  mentioned above and also some other classes of symplectic manifolds;
  see, e.g., \cite{BK,Ked}.

\begin{Remark}[Growth]
  In the settings of Theorems \ref{thm:homology-chi} and
  \ref{thm:atoro}, the number of simple non-contractible periodic
  orbits with period less than or equal to $k$, or the number of
  distinct homotopy classes represented by such orbits, is bounded
  from below by $\const \cdot k/ \ln k$.  An immediate consequence of
  the theorems is that $H$ has infinitely many simple periodic orbits
  with homology class in $\N[\ff]$ regardless of whether or not the
  set of one-periodic orbits (in the class $[\ff]$) is finite.
\end{Remark}

The simplest manifold the above theorems do not apply to is the
standard symplectic torus $\T^{2n}$ with $2n\geq 4$. In dimension two,
it is easy to see that for strongly non-degenerate Hamiltonian
diffeomorphisms the presence of one non-contractible orbit in a
homotopy class $\ff$ implies the existence of infinitely many simple
periodic orbits with homotopy class in $\ff^\N$. (Recall that a
diffeomorphism is said to be strongly non-degenerate if all its
periodic orbits are non-degenerate.) The proof of this fact amounts to
the observation that in dimension two the mean index determines the
Conley--Zehnder index and is similar to the proofs of \cite[Thm.\
5.1.9]{Ab} or \cite[Thm.\ 1.7]{GG:generic}. However, somewhat
surprisingly, it is not clear at all whether the non-degeneracy
condition here can be replaced as in, e.g., \cite{Gu:nc, Gu:hq} by the
requirement that the orbit has non-vanishing local Floer homology.
When $2n\geq 4$, no results along these lines have been established
for $\T^{2n}$.

It is interesting to contrast the previous theorems with the
following, admittedly superficial, $C^\infty$-generic existence
result.  Namely, $C^\infty$-generically, the existence of one
non-contractible one-periodic orbit is sufficient to infer the
existence of infinitely many simple non-contractible periodic orbits under
no conditions on $M$ and with only very minor assumptions about $\ff$;
cf. \cite{GG:generic}. To be more precise, denote by $\FF_\ff$ the
collection of strongly non-degenerate Hamiltonian diffeomorphisms with
a one-periodic orbit in $\ff$. Clearly, $\FF_\ff$ is $C^\infty$-open
in the group of Hamiltonian diffeomorphisms. As has been pointed out
above, this set is non-empty by, e.g., \cite[Prop.\ 1.3]{Ba}. Recall
also that a subset is called residual, or second Baire category, when
it is the intersection of a countable collection of open and dense
subsets.

\begin{Theorem}
\label{thm:generic}
Assume that $\ff^k\neq 1$ for all $k\in\N$. Then the subset
$\FF_\ff^\infty$ of $\FF_\ff$ formed by Hamiltonian diffeomorphisms
with infinitely many simple periodic orbits in $\ff^\N$ is
$C^\infty$-residual.
\end{Theorem}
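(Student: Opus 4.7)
The plan is to present $\FF_\ff^\infty$ as a countable intersection of $C^\infty$-open and dense subsets of the Baire space $\FF_\ff$, then appeal to the Baire category theorem. For each $N \in \N$, set
$$
\FF_\ff^{\geq N} \defn \{\varphi \in \FF_\ff : \varphi \text{ has at least $N$ simple periodic orbits with homotopy class in } \ff^\N\},
$$
so that $\FF_\ff^\infty = \bigcap_N \FF_\ff^{\geq N}$. It then suffices to verify that each $\FF_\ff^{\geq N}$ is $C^\infty$-open and $C^\infty$-dense in $\FF_\ff$.

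Openness is the easy half. Any $\varphi \in \FF_\ff^{\geq N}$ is strongly non-degenerate, so each of its simple periodic orbits is a transverse fixed point of some iterate $\varphi^k$ and therefore persists under $C^\infty$-small perturbations. The free homotopy class of the deformed orbit varies continuously, hence is preserved; and the orbital period cannot drop because $\varphi^j(y) \neq y$ for $j < k$ is an open condition. Strong non-degeneracy and the presence of a one-periodic orbit in the class $\ff$ are likewise $C^\infty$-stable, which together yield openness of $\FF_\ff^{\geq N}$ in $\FF_\ff$.

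The heart of the proof is density. Given $\varphi_H \in \FF_\ff$ with only $m < N$ simple periodic orbits in $\ff^\N$, I would construct an arbitrarily $C^\infty$-small perturbation $H' = H + K$ such that $\varphi_{H'}$ carries at least $m+1$ such orbits. Pick a prime $k$ so large that no existing simple orbit of $\varphi_H$ in $\ff^\N$ has homotopy class $\ff^k$. Since $\ff^k \neq 1$, choose a smooth embedded loop $\gamma_k \colon S^1 \to M$ in the class $\ff^k$, disjoint from the seed orbit $x$ and from the existing simple orbits. By Weinstein's isotropic neighborhood theorem, a thin tubular neighborhood $U$ of $\gamma_k$ is symplectomorphic to a standard normal-form model. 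Inside $U$ one then constructs a time-periodic, compactly supported, $C^\infty$-small Hamiltonian $K$ whose flow manufactures a non-degenerate $k$-periodic orbit of $\varphi_{H+K}$ contained in $U$ and tracing $\gamma_k$ once; this orbit automatically has homotopy class $\ff^k$ and period exactly $k$, hence is simple. Because $K$ is supported away from $x$, the seed orbit survives, so $\varphi_{H+K} \in \FF_\ff$, and a final generic perturbation restores strong non-degeneracy of all other orbits.

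The main obstacle is this local perturbation construction: producing a $C^\infty$-small Hamiltonian whose flow realizes a prescribed free homotopy class as a periodic orbit. It is essentially a homotopy-class-targeted Franks-type lemma. Crucially, it bypasses the notorious $C^\infty$ closing-lemma obstruction, because we need only create one new orbit at a location of our own choosing rather than close every non-wandering point; this freedom is what allows us to engineer the Weinstein model along $\gamma_k$ so that the dynamics of $H$ can be overridden within $U$ by an arbitrarily small perturbation. Carrying out this plug construction -- and carefully controlling simplicity, non-degeneracy, and the homotopy class of the resulting orbit -- is where the technical work of the proof must be concentrated.
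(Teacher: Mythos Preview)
Your reduction to showing that each $\FF_\ff^{\geq N}$ is $C^\infty$-open and dense is fine, and the openness argument is correct. The density argument, however, contains a genuine gap that cannot be repaired along the lines you suggest.

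The step where you ``manufacture a non-degenerate $k$-periodic orbit'' in a Weinstein tube $U$ around $\gamma_k$ by adding a $C^\infty$-small Hamiltonian $K$ does not work. The Weinstein normal form controls only the symplectic structure near $\gamma_k$; it says nothing about the dynamics of $H$ there. If, say, $\varphi_H^k(U)\cap U=\emptyset$, then the same holds for every $C^\infty$-small perturbation of $\varphi_H$, and no $k$-periodic point can appear in $U$ at all. More generally, the time-$k$ map may displace points of $U$ by a distance of order one, and a $C^0$-small (let alone $C^\infty$-small) perturbation of the generating Hamiltonian cannot compensate for that. Your claim that the freedom to choose the location of $\gamma_k$ ``bypasses the $C^\infty$ closing-lemma obstruction'' is a misconception: being able to choose where to attempt the closing does not help unless you can show that \emph{some} representative of $\ff^k$ is already nearly recurrent for $\varphi_H$, and that is precisely the content of a closing lemma. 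The Franks lemma you allude to perturbs the linearization along an \emph{existing} orbit; it does not create orbits from scratch.

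The paper's proof proceeds along entirely different lines, using Floer homology rather than a local construction. Since $\ff^k\neq 1$, one has $\HF(H^{\nat k},\ff^k)=0$, so the Floer complex for $\ff^k$ must contain generators of both parities of Conley--Zehnder index. Starting with $k=1$, this forces either a non-hyperbolic one-periodic orbit in $\ff$, or a negative hyperbolic one. In the first case the Birkhoff--Lewis--Moser theorem yields, after a generic perturbation, infinitely many periodic points near the elliptic orbit, all in classes $\ff^m$. In the second case one passes to $\varphi_H^2$: squares of one-periodic orbits are positive hyperbolic, so the parity argument in $\ff^2$ produces a \emph{new} simple two-periodic orbit, again either non-hyperbolic or negative hyperbolic. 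Iterating, one either eventually finds a non-hyperbolic orbit or produces an infinite sequence of simple orbits directly. The global input $\HF=0$ is what replaces the unavailable closing-type mechanism.
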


It is essential that in this theorem the ambient manifold $M$ is not
required to be toroidally monotone or negative monotone. In fact, no
conditions on $M$, other than compactness, is necessary. However, as
in the case of other results of this paper, the proof makes use of the
Hamiltonian Floer homology, and we need to either assume that $M$ is
weakly monotone or rely on the machinery of virtual cycles.

A consequence of Theorem \ref{thm:generic} in the spirit of an
observation in \cite{PS} is that non-autonomous Hamiltonian
diffeomorphisms (i.e., Hamiltonian diffeomorphisms that cannot be
generated by autonomous Hamiltonians) form a $C^\infty$-residual
subset of $\FF_\ff$. In fact, every $\varphi\in\FF_\ff^\infty$ is
necessarily non-autonomous. Indeed, when $k>1$, simple $k$-periodic
orbits of an autonomous Hamiltonian diffeomorphism are never isolated,
and hence, in particular, never non-degenerate.

\section{Augmented action filtration}
\label{sec:augfil}
Our goal in this section is to show that when the augmented action gap
is sufficiently large the Floer homology for non-contractible periodic
orbits is filtered by the augmented action, and to analyze the
behavior of this filtration under continuation maps.  As in the rest
of the paper, we assume that $(M^{2n}, \omega)$ is a closed,
toroidally monotone or toroidally negative monotone symplectic
manifold. However, the construction of the Floer homology (but not of
the augmented action filtration) goes through in general for any
compact manifold $M$, at least when $M$ is weakly monotone or via the
technique of virtual cycles.

\subsection{Preliminaries: iterated Hamiltonians}
\label{sec:prelim}

Let $H\colon S^1\times M\to\R$ be a one-periodic in time Hamiltonian
on $M$.  The augmented action of $H$ is homogeneous under the
iterations of $\varphi_H$. To make this more precise, let us recall a
few standard definitions.

Let $K$ and $H$ be two one-periodic Hamiltonians. The ``composition''
$K\nat H$ is, by definition, the Hamiltonian
$$
(K\nat H)_t=K_t+H_t\circ(\varphi^t_K)^{-1},
$$
and the flow of $K\nat H$ is $\varphi^t_K\circ \varphi^t_H$. We set
$H^{\nat k}=H\nat\ldots \nat H$ ($k$ times).  Abusing terminology, we
will refer to $H^{\nat k}$ as the $k$th iteration of $H$.  (Note that
the flow $\varphi^t_{H^{\nat k}}=(\varphi_H^t)^k$, $t\in [0,\,1]$, is
homotopic with fixed end-points to the flow $\varphi^t_H$,
$t\in [0,\, k]$.)

In general, $H^{\nat k}$ is not one-periodic, even when $H$
is. However, $H^{\nat k}$ becomes one-periodic when, for example,
$H_0\equiv 0\equiv H_1$. The latter condition can always be met by
reparametrizing the Hamiltonian as a function of time without changing
the time-one map. This procedure does not affect the Hofer norm, and
actions and indices of the periodic orbits.  Thus, in what follows, we
usually treat $H^{\nat k}$ as a one-periodic
Hamiltonian. Alternatively, the Hamiltonian diffeomorphism
$\varphi_H^k$ can be obtained as the time-$k$ flow of $H$. Thus, in
some instances such as the proof of Lemma \ref{lemma:hom}, it is more
convenient to treat $H^{\nat k}$ as the $k$-periodic Hamiltonian $H_t$
with $t\in\R/k\Z$. We will always state specifically when this is the
case. Clearly, these two Hamiltonians, both denoted by $H^{\nat k}$,
have canonically isomorphic filtered Floer homology.

The $k$th iteration of a one-periodic orbit $x$ of $H$ is denoted by
$x^k$.  More specifically, $x^k$ is the $k$-periodic orbit $x(t)$,
$t\in [0,\,k]$, of $H$.  There is an action-- and mean
index--preserving one-to-one correspondence between the one-periodic
orbits of $H^{\nat k}$ and the $k$-periodic orbits of $H$. Thus, we
can also think of $x^k$ as the one-periodic orbit
$x^k(t)=\varphi_{H^{\nat k}}^t \left(x(0)\right)$ of $H^{\nat k}$.

Assume now that all iterated homotopy classes $\ff^k$, $k\in\N$, are
distinct and nontrivial. As above, we have a reference loop $z\in \ff$
fixed together with a trivialization of $TM|_z$. Let us chose the
iterated loop $z^k$ with the ``iterated trivialization'' as the
reference loop for $\ff^k$. Then the action and the mean index are
both homogeneous with respect to the iteration and, as a consequence,
$$
\tA_{H^{\nat k}}(x^{k}) = k \tA_H(x).
$$

\subsection{Floer homology for non-contractible periodic orbits}
\label{sec:Floer}

The key tool used in the proofs of Theorems
\ref{thm:homology-hyperbolic} and \ref{thm:homology-chi} is the Floer
homology for non-contractible periodic orbits of Hamiltonian
diffeomorphisms.  Various flavors of Floer homology in this case for
both open and closed manifolds have been considered in several other
works; see, e.g., \cite{BPS, GL, GG:hyperbolic, Gu:nc, Lee, Ni, We}.
Below we assume that $M^{2n}$ is closed and toroidally monotone or
toroidally negative monotone. In the latter case, to have the Floer
homology defined, one must either rely on the machinery of multivalued
perturbations (and set the coefficient ring to be $\Q$) or require in
addition that $\NS \geq n$ to ensure that $M$ is weakly monotone,
where $\NS$ is the minimal spherical Chern number; cf.\ \cite{LO}.

Let us now briefly describe the elements of the construction of the
Floer homology relevant to our argument. Fix $\ff\in\tpi_1(M)$.  Let
$H$ be a Hamiltonian such that all one-periodic orbits of $H$ in $\ff$
are non-degenerate. (Here $x$ is said to be non-degenerate if the
linearized return map $d\varphi_H \colon T_{x (0)}M\to T_{x (0)}M$
does not have one as an eigenvalue.)  The Floer complex $\CF(H,\ff)$
is generated, over some fixed coefficient ring, by these orbits. The
Floer differential is defined in the standard way. With this
definition, the complex $\CF(H,\ff)$ is neither graded nor does it
carry an action filtration.  The homology $\HF(H,\ff)$ of $\CF(H,\ff)$
is equal to zero when $\ff\neq 1$. Indeed, by the standard
continuation argument $\HF(H,\ff)$ is independent of $H$ (cf.\ Section
\ref{sec:cont}) and, since all one-periodic orbits of a $C^2$-small
autonomous Hamiltonian $H$ are contractible, we have
$\HF(H,\ff)=0$. As is well known, $\HF(H,1)=\H_*(M)$ at least over
$\Q$; see, e.g., \cite{MS} for further references.

To give the complex $\CF(H,\ff)$ some more structure, let us fix a
reference loop $z\in\ff$ and a trivialization of $TM|_z$.  Using this
trivialization, we can define the \emph{Conley--Zehnder index}
$\MUCZ(H,\bx)\in\Z$ of a capped \emph{non-degenerate} orbit $\bx$ as
in, e.g., \cite{MS,SZ}.  For future reference, note that
\begin{equation}
\label{eq:mean-CZ}
|\Delta_H(\bx)-\MUCZ(\bx)|\leq n .
\end{equation}

Similarly to the contractible case, the Conley-Zehnder $\MUCZ(H,x)$ of
an orbit without capping is defined only modulo $2\NT$. As a result,
we obtain a $\Z_{2\NT}$-grading of the complex $\CF(H,\ff)$ and of the
homology $\HF(H,\ff)$, and, in particular, a $\Z_2$-grading. Replacing
the one-periodic orbits of $H$ by the capped one-periodic orbits, one
could define the Floer complex and the homology of $H$ as a module
over a suitably chosen Novikov ring and, as in the contractible case,
this complex and the homology would be $\Z$-graded and filtered by the
action. However, for our purposes it is more convenient to work with
the homology $\HF(H,\ff)$ and the complex $\CF(H,\ff)$ generated by
the non-capped orbits and defined as above.

The constructions from this section readily carry over to the case
where a single free homotopy class $\ff$ is replaced by a collection
of free homotopy classes. For instance, one can specify the collection
of free homotopy classes of loops by prescribing a homology class.

\subsection{Filtration}
\label{sec:filtration}
Let, as above, $M^{2n}$ be a toroidally monotone or toroidally
negative monotone closed symplectic manifold with monotonicity
constant $\lambda$.  In what follows, we have a free homotopy class
$\ff$ or a collection of such classes together with the reference
loops and trivializations fixed and suppressed in the notation. Thus
we write $\CF(H)$ for $\CF(H,\ff)$, etc. With these auxiliary data
fixed, the augmented action spectrum $\tCS(H):=\tCS(H,\ff)$ is defined
for any Hamiltonian $H$ on $M$.

The \emph{augmented action gap} is the infimum of the distance between
two distinct points in the augmented action spectrum $\tCS(H)$, i.e.,
$$
\gap(H)=\inf |s-s'|\in [0,\,\infty], \textrm{ where $s$ and
  $s'\neq s$ are in $\tCS(H)$.}
$$
We emphasize that $\gap(H)$ is defined even when $H$ is degenerate. It
is also worth pointing out that $\gap(H)$ is neither upper nor lower
semicontinuous in $H$.

Set
\begin{equation}
\label{eq:C0}
c_0(M)=|\lambda|\frac{2n\pm 1}{2},
\end{equation}
where the sign $\pm$ is $\sign(\lambda)$. We say that the \emph{gap
  condition} is satisfied whenever
\begin{equation}
\label{eq:gap}
\gap(H)>c_0(M).
\end{equation}

\begin{Proposition}
\label{prop:action-filt}
Assume that all one-periodic orbits of $H$ in $\ff$ are non-degenerate
and \eqref{eq:gap} holds. Then the complex $\CF(H)$, and hence the
homology $\HF(H)$, is filtered by the augmented action. In other
words,
\begin{equation}
\label{eq:action-filt}
\tA_H(y)\leq \tA_H(x)
\end{equation}
whenever $y$ enters $\p x=\sum a_y y$ with non-zero coefficient.
\end{Proposition}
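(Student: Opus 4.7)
The plan is to show that for any Floer trajectory $u$ contributing to the matrix coefficient $\langle \partial x, y\rangle$, the augmented actions satisfy the one-sided bound $\tA_H(x) - \tA_H(y) \geq -c_0(M)$. Combined with \eqref{eq:gap}, this forces $\tA_H(y) \leq \tA_H(x)$: if the strict inequality $\tA_H(y) > \tA_H(x)$ held, then two distinct points of $\tCS(H)$ would be separated by at most $c_0(M) < \gap(H)$, a contradiction.

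To produce this bound, I would fix any capping $\bx$ of $x$ and let $\by \defn \bx \# u$ be the capping of $y$ induced by gluing the Floer cylinder $u$ onto $\bx$. Two standard facts then apply to this matched pair of cappings. First, the energy identity gives
\begin{equation*}
\CA_H(\bx) - \CA_H(\by) \;=\; E(u) \;\geq\; 0 .
\end{equation*}
Second, the Fredholm index of a trajectory contributing to the differential is exactly one, and for matched cappings it coincides with $\MUCZ(H,\bx) - \MUCZ(H,\by)$. Combining this with the universal estimate \eqref{eq:mean-CZ} applied to both orbits,
\begin{equation*}
1 - 2n \;\leq\; \Delta_H(\bx) - \Delta_H(\by) \;\leq\; 1 + 2n .
\end{equation*}

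Since the augmented action is independent of the capping, I may then evaluate it on $\bx$ and $\by$ and compute
\begin{equation*}
\tA_H(x) - \tA_H(y) \;=\; E(u) \;-\; \frac{\lambda}{2}\bigl(\Delta_H(\bx)-\Delta_H(\by)\bigr).
\end{equation*}
Dropping the non-negative term $E(u)$ and bounding the mean-index difference above according to the sign of $\lambda$ (upper bound $1+2n$ when $\lambda>0$, lower bound $1-2n$ when $\lambda<0$) produces $\tA_H(x) - \tA_H(y) \geq -|\lambda|(2n \pm 1)/2 = -c_0(M)$ with $\pm = \sign(\lambda)$, matching \eqref{eq:C0}. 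Invoking the gap condition as in the first paragraph concludes the proof. The one conceptual point worth flagging is that the derived estimate $\tA_H(x) - \tA_H(y) \geq -c_0(M)$ does \emph{not} exclude the equality $\tA_H(y) = \tA_H(x)$ between distinct orbits; this is precisely why the resulting filtration, although well defined, is not strict, and it is the source of the extra complications mentioned in Section \ref{sec:filtration}.
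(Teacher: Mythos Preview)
Your proof is correct and follows essentially the same route as the paper: fix a capping of $x$, give $y$ the capping inherited via the Floer cylinder, use positivity of energy together with $\MUCZ(\bx)-\MUCZ(\by)=1$ and the bound \eqref{eq:mean-CZ}, and conclude via the gap condition. The only cosmetic difference is that the paper writes the estimate as a chain of inequalities for $\tA_H(y)$ in the case $\lambda\geq 0$ and relegates $\lambda<0$ to a remark, whereas you compute the difference $\tA_H(x)-\tA_H(y)$ directly and handle both signs of $\lambda$ at once; the content is identical.
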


\begin{Remark}
  In contrast with the standard action filtration, the augmented
  action filtration is not necessarily strict, i.e., equality in
  \eqref{eq:action-filt} can occur. Note also that in this proposition
  it suffices to have a non-strict inequality in the gap
  condition~\eqref{eq:gap}.
\end{Remark}

\begin{proof} Throughout the proof, let us assume that
  $\lambda\geq 0$, i.e., $M$ is toroidally monotone. The negative
  monotone case is dealt with by a similar (up to some signs)
  calculation.

  To establish \eqref{eq:action-filt}, let us fix a capping of
  $x$. Then an orbit $y$ entering $\p x$ with non-zero coefficient
  inherits a capping from $\bx$. We have

 \begin{equation*}
\begin{split}
\tA_H(y)&= \CA_H(\by)-\frac{\lambda}{2}\Delta_H(\by)\\
&< \CA_H(\bx)-\frac{\lambda}{2}\big(\MUCZ(\by)-n\big)\\
&= \CA_H(\bx)-\frac{\lambda}{2}\big(\MUCZ(\bx)-n-1\big)\\
&\leq \CA_H(\bx)-\frac{\lambda}{2}\big(\Delta_H(\bx)-2n-1\big)\\
&= \tA_H(x)+c_0(M) .
\end{split}
\end{equation*}
Here we used \eqref{eq:mean-CZ} and the facts that
$\MUCZ(\by)=\MUCZ(\bx)-1$ and that $\p$ is action decreasing.  Thus we
have shown that $\p$ does not increase the augmented action by more
than $c_0(M)$. Now the required inequality \eqref{eq:action-filt}
follows once the augmented action gap is greater than $c_0(M)$, i.e.,
when the gap condition \eqref{eq:gap} holds.
\end{proof}

With Proposition \ref{prop:action-filt} in mind, we can define the
augmented action filtration on the homology exactly in the same way as
for the ordinary action.  Thus, assume that \eqref{eq:gap} is
satisfied and $a\not\in \tCS(H)$ and denote by
$\tCF^{(-\infty,\,a)}(H)$ the subcomplex of $\CF(H)$ generated by the
orbits with augmented action below $a$.  Let $\tHF^{(-\infty,\,a)}(H)$
be the homology of this subcomplex.  Furthermore, when $I=(a,\,b)$ is
an interval with end points outside $\tCS(H)$, we set
$$
\tCF^{I}(H)=\tCF^{(-\infty,\,b)}(H)/ \tCF^{(-\infty,\,a)}(H).
$$
In other words, $\tCF^{I}(H)$ is the complex generated by the orbits
with augmented action in $I$, equipped with the naturally defined
differential. We denote the homology of this complex by $\tHF^{I}(H)$.
(The role of the tilde here is to emphasize that we use the augmented
action rather than the ordinary action and that $I$ is an augmented
action range.) We have the long exact sequence
$$
\cdots\to \tHF^{(-\infty,\,a)}(H)\to \tHF^{(-\infty,\,b)}(H)\to
\tHF^{I}(H) \to\cdots
$$
and a similar exact sequence for three intervals
\begin{equation}
\label{eq:exact-seq}
\cdots\to \tHF^{(c,\,a)}(H)\to \tHF^{(c,\,b)}(H)\to \tHF^{(a,\,b)}(H)
\to\cdots .
\end{equation}

Our next goal is to show that the construction of the augmented action
filtered Floer homology extends by continuity to all, not necessarily
non-degenerate, Hamiltonians.

\begin{Proposition}
\label{prop:action-filt2}
Let $H$ be a Hamiltonian on $M$, not necessarily non-degenerate, such
that the gap condition \eqref{eq:gap} is satisfied and let
$a\not\in \tCS(H)$. Then for any non-degenerate Hamiltonian $K$
sufficiently $C^1$-close to $H$, the subspace
$\tCF^{(-\infty,\,a)}(K)\subset \CF(K)$ is a subcomplex.
\end{Proposition}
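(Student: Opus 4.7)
The plan is to bootstrap from the augmented action bound established inside the proof of Proposition \ref{prop:action-filt}, namely
\begin{equation*}
\tA_K(y) \leq \tA_K(x) + c_0(M)
\end{equation*}
whenever $y$ appears with non-zero coefficient in $\partial x$. A careful reading of that proof shows that this inequality uses only the non-degeneracy of the Hamiltonian (via $\MUCZ(\by) = \MUCZ(\bx) - 1$ and the bound \eqref{eq:mean-CZ}); the gap condition \eqref{eq:gap} enters only afterwards, to strengthen $c_0(M)$ to $0$. Hence the bound is freely available for every non-degenerate $K$ close to $H$, even though $K$ itself need not satisfy \eqref{eq:gap}.

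The first step is to observe that the gap condition for $H$, combined with the compactness of $\tCS(H)$, forces $\tCS(H)$ to be a \emph{finite} set: $\gap(H) > c_0(M) \geq 0$ means every point of $\tCS(H)$ is isolated within $\tCS(H)$, and a compact discrete set is finite. Write $\tCS(H) = \{s_1 < \cdots < s_N\}$ with $s_{i+1} - s_i > c_0(M)$ for all $i$, and let $k$ be the index with $s_k < a < s_{k+1}$ (setting $s_0 = -\infty$ and $s_{N+1} = +\infty$ at the extremes). Choose $\delta > 0$ so that
\begin{equation*}
2\delta < \min_i \bigl(s_{i+1} - s_i - c_0(M)\bigr) \quad\text{and}\quad \delta < \min(a - s_k,\ s_{k+1} - a),
\end{equation*}
and set $U_\delta := \bigcup_i (s_i - \delta, s_i + \delta)$. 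Then $\tCS(H) \subset U_\delta$ and $a \notin U_\delta$.

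Next I would invoke the upper semicontinuity of the augmented action spectrum recalled in Section \ref{sec:conventions}: for every non-degenerate $K$ sufficiently $C^1$-close to $H$ one has $\tCS(K) \subset U_\delta$. In particular $a \notin \tCS(K)$, and the subspace $\tCF^{(-\infty,\,a)}(K) \subset \CF(K)$ is generated precisely by those orbits $y$ of $K$ with $\tA_K(y) \in (s_j - \delta,\, s_j + \delta)$ for some $j \leq k$. To verify that this subspace is a subcomplex, suppose $x$ and $y$ are orbits of $K$ lying in the clusters around $s_i$ and $s_j$ respectively, with $y$ in $\partial x$. The differential bound yields
\begin{equation*}
s_j - \delta < \tA_K(y) \leq \tA_K(x) + c_0(M) < s_i + \delta + c_0(M),
\end{equation*}
so $s_j - s_i < 2\delta + c_0(M)$, which by our choice of $\delta$ forces $j \leq i$. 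Thus $i \leq k$ implies $j \leq k$, proving that $\tCF^{(-\infty,\,a)}(K)$ is preserved by~$\partial$.

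The main obstacle I anticipated is that $K$ need not satisfy the gap condition itself: a non-degenerate perturbation of $H$ can split a degenerate orbit into several orbits of $K$ with nearly identical augmented actions, so $\gap(K)$ can be arbitrarily small. This is harmless, however, because we never need the full gap condition for $K$; we need only the coarse cluster structure of $\tCS(K)$ inherited from $\tCS(H)$, and that is exactly what the upper semicontinuity of $\tCS$ provides.
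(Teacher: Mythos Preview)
Your proof is correct and rests on exactly the same two ingredients as the paper's: the bound $\tA_K(y)\leq \tA_K(x)+c_0(M)$ extracted from the proof of Proposition~\ref{prop:action-filt}, and the $C^1$-upper semicontinuity of $\tCS$. The only difference is packaging: the paper argues orbit-by-orbit, tracking each $x,y$ of $K$ back to nearby orbits $x',y'$ of $H$ and comparing $\tA_H(x')$ with $\tA_H(y')$ directly via the gap condition, whereas you first observe that $\tCS(H)$ is finite and then run a global cluster argument---a slightly more structural but entirely equivalent route.
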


This result is not an immediate consequence of Proposition
\ref{prop:action-filt}. Since the augmented action gap is not lower
semicontinuous in the Hamiltonian, we cannot guarantee that
\eqref{eq:gap} holds for $K$ if it holds for $H$, and thus \emph{a
  priori} Proposition \ref{prop:action-filt} need not apply to $K$.

\begin{proof} Let $x$ be a one-periodic orbit of $K$ with $\tA_K(x)<a$
  and let $y$ be an orbit entering $\p x$ with non-zero
  coefficient. We need to show that $\tA_K(y)<a$.

  The orbits $x$ and $y$ are $C^1$-small perturbations of one-periodic
  orbits $x'$ and $y'$ of $H$ with augmented actions close to those of
  $x$ and $y$.  By continuity of the augmented action spectrum, we
  necessarily have $\tA_H(x')<a$ when $K$ is $C^1$-close to $H$.

  If $\tA_H(y')>\tA_H(x')$, we have $\tA_H(y')-\tA_H(x')> c_0(M)$ by
  \eqref{eq:gap}, and therefore $\tA_K(y)-\tA_K(x)> c_0(M)$. This is
  impossible because, as we have seen from the proof of Proposition
  \ref{prop:action-filt}, the differential cannot increase the
  augmented action by more than $c_0(M)$.  Thus
  $\tA_H(y')\leq\tA_H(x')$. Then
$$
\tA_K(y)\approx \tA_H(y')\leq \tA_H(x')<a,
$$ 
and hence $\tA_K(y)<a$ when $K$ is $C^1$-close to $H$.
\end{proof}

Now, for any Hamiltonian $H$, when the end-points of an interval $I$
are outside $\tCS(H)$ and \eqref{eq:gap} holds, we can, utilizing
Proposition \ref{prop:action-filt2}, set $\tHF^I(H):=\tHF^I(K)$, where
$K$ is a $C^1$-small non-degenerate perturbation of $H$. Using
standard continuation arguments (cf.\ Section \ref{sec:cont}), it is
easy to see that the resulting homology is well defined, i.e.,
independent of $K$.

\begin{Example}
  The setting we are interested in where the gap condition
  \eqref{eq:gap} is satisfied is when $H$ is a high prime order
  iteration of some Hamiltonian $F$, i.e., $H=F^{\nat k}$ and $k$ is a
  large prime. In this case, either $F$ has simple $k$-periodic orbits
  or $\gap(H)=k\gap(F)$. Thus, either new periodic orbits are created
  or the gap grows linearly under the iterations of $F$ and eventually
  becomes greater than $c_0(M)$.
\end{Example}

It is clear that all these constructions respect the
$\Z_{2\NT}$-grading (and hence the $\Z_2$-grading) of the complexes
and the homology. Thus, for instance, \eqref{eq:exact-seq} is an exact
sequence of graded complexes and the connecting map has degree $-1$.

For degenerate Hamiltonians with isolated one-periodic orbits, one
can, similarly to the case of the standard action filtration, view the
local Floer homology as building blocks for the Floer homology
filtered by the augmented action. For instance, assume that
$\tCS(H)\cap I=\{c\}$ and all one-periodic orbits $x$ with augmented
action $c$ are isolated. Then it is not hard to see that there exists
a spectral sequence with $E^1=\bigoplus_x\HF(x)$ converging to
$\tHF^I(H)$, where $\HF(x)$ stands for the local Floer homology of
$x$. (We refer the reader to, e.g., \cite{Gi:CC,GG:gap,McL} for the
definition and a discussion of the local Floer homology.) In contrast
with the case of the ordinary action filtration, we do not necessarily
have $E^1= \tHF^I(H)$ even when $H$ is non-degenerate. The reason is
that the augmented action filtration is not strict and the Floer
differential, or more generally Floer trajectories, can connect orbits
with equal augmented action.

However, as is easy to see, for any interval $I$ with end points
outside $\tCS(H)$, we have
\begin{equation}
\label{eq:chi-hom}
\chi(H,I)=(-1)^n\big[\dim \tHF^I_{\mathit{even}}(H)
-\dim \tHF^I_{\mathit{odd}}(H)\big].
\end{equation}
In particular, $\tHF^I(H)\neq 0$ if $\chi(H,I)\neq 0$. (Here, as
everywhere in this section, we have suppressed the class $\ff$ in the
notation.)

\subsection{Homotopy and continuation}
\label{sec:cont}

The behavior of the augmented action under homotopies is similar to
that of the ordinary action. Namely, recall that a continuation map
shifts the action filtration upward by a certain constant; see, e.g.,
\cite[Sect.\ 3.2.2]{Gi:coiso}. This is still true for the augmented
action, although the size of the shift is slightly
different. Furthermore, when the homotopy is monotone decreasing, the
action shift is zero, and the induced map in homology preserves the
action filtration. This fact does not have a direct analogue for the
augmented action, but the augmented action filtration is preserved
when the augmented action gaps for the Hamiltonians are large enough.

To be more precise, consider a homotopy $H^s$ from a Hamiltonian $H^0$
to a Hamiltonian $H^1$ on $M$, and set
$$
c_a(H^s)=\int_{-\infty}^\infty\int_{S^1}\max_M \p_s H^s\,dt\,ds.
$$
For instance,
$$
c_a(H^s)=\int_{S^1}\max_M (H^1-H^0)\,dt\
$$
when $H^s$ is a linear homotopy from $H^0$ to $H^1$. The augmented
action shift is governed by the constant
\begin{equation}
\label{eq:shift}
c_h(H^s)=\max\big\{0,c_a(H^s)\big\}
+|\lambda|n\geq 0.  
\end{equation}

\begin{Proposition}
\label{prop:homotopy}
Assume that both Hamiltonians $H^0$ and $H^1$ satisfy \eqref{eq:gap},
i.e.,
\begin{equation}
\label{eq:gap2}
\gap(H^0)>c_0(M)\textrm{ and } \gap(H^1)>c_0(M).
\end{equation}
Then a homotopy $H^s$ from $H^0$ to $H^1$ induces a map in the Floer
homology shifting the action filtration upward by $c_h(H^s)$:
$$
\tHF^I(H^0)\to \tHF^{c_h(H^s)+I}(H^1),
$$
where $c_h(H^s)+I$ stands for the interval $I$ moved to the right by
$c_h(H^s)$. Furthermore, if
$$
\gap(H^0)>c_h(H^s)\textrm{ and } \gap(H^1)>c_h(H^s)
$$
in addition to \eqref{eq:gap2}, the map induced by the homotopy
preserves the augmented action filtration, i.e., we have
$\tHF^I(H^0)\to \tHF^{I}(H^1)$.
\end{Proposition}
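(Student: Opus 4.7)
The plan is to follow the standard construction of continuation maps in Hamiltonian Floer theory, tracking the augmented action throughout. By Proposition \ref{prop:action-filt2}, I would choose non-degenerate $C^1$-small perturbations $K^0$ and $K^1$ of $H^0$ and $H^1$ together with a generic homotopy $K^s$ close to $H^s$, so that the standard continuation chain map $\Phi\colon \CF(K^0) \to \CF(K^1)$ is defined via counts of Floer continuation trajectories for $K^s$. Each orbit $y$ appearing with nonzero coefficient in $\Phi(x)$ inherits a capping $\by$ from a chosen capping $\bx$ of $x$ by gluing the continuation cylinder; with this identification the vanishing of the Fredholm index of the continuation equation forces $\MUCZ(\by) = \MUCZ(\bx)$.

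For the first part, the central calculation is the augmented action shift along such a trajectory. The standard energy-action identity gives $\CA_{K^1}(\by) - \CA_{K^0}(\bx) \le c_a(K^s)$. Combined with $\MUCZ(\by) = \MUCZ(\bx)$ and the universal bound $|\Delta - \MUCZ| \leq n$ from \eqref{eq:mean-CZ}, this yields $|\Delta_{K^1}(\by) - \Delta_{K^0}(\bx)| \leq 2n$, and therefore
\[
\tA_{K^1}(y) - \tA_{K^0}(x) \leq c_a(K^s) + |\lambda|n,
\]
which is at most $c_h(H^s)$ up to an error that tends to zero as $K^s \to H^s$ in $C^1$. Hence $\Phi$ shifts the augmented action filtration by at most $c_h(H^s)$, and passing to the limit in the perturbations---using continuity of $\tCS$ in $H$ together with the openness built into Proposition \ref{prop:action-filt2}---yields the claimed map $\tHF^I(H^0) \to \tHF^{c_h(H^s)+I}(H^1)$.

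For the second part, I would use the strengthened gap hypothesis $\gap(H^i) > c_h(H^s)$ to replace the target interval $c_h(H^s) + I$ by $I$ itself. Since consecutive points of $\tCS(H^1)$ are separated by more than $c_h(H^s)$, translating a window whose endpoints lie outside $\tCS(H^1)$ by $c_h(H^s)$ cannot introduce new spectrum values into it; consequently the natural comparison between $\tHF^I(H^1)$ and $\tHF^{c_h(H^s)+I}(H^1)$ becomes an isomorphism through which $\Phi_*$ factors, yielding the desired map $\tHF^I(H^0) \to \tHF^I(H^1)$. The main obstacle is this last step: the augmented action filtration is not strict, and the gap is not lower semicontinuous under perturbation, so the filtration-preservation has to be proved in terms of the spectrum of $H^i$ itself rather than of the approximating $K^i$; one likely route is to subdivide the homotopy $H^s$ into sufficiently small sub-homotopies whose individual shifts lie below the ambient gap, so that each incremental continuation preserves the filtration by direct chain-level inspection, and then to compose.
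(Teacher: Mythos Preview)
The paper does not actually give a proof of this proposition; it only remarks that the argument is standard and that the two summands in $c_h(H^s)$ bound, respectively, the ordinary action shift and the mean-index contribution along a continuation trajectory, ``as can be seen from an argument similar to the proof of Proposition~\ref{prop:action-filt}.'' Your treatment of the first assertion reconstructs exactly this: the energy estimate $\CA_{K^1}(\by)\le\CA_{K^0}(\bx)+c_a(K^s)$ together with $\MUCZ(\by)=\MUCZ(\bx)$ and \eqref{eq:mean-CZ} gives $\tA_{K^1}(y)-\tA_{K^0}(x)\le c_a(K^s)+|\lambda|\,n\le c_h(H^s)$ up to a perturbation error, which is precisely the paper's sketch.

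Your argument for the second assertion, however, has a genuine gap. The claim that ``translating a window whose endpoints lie outside $\tCS(H^1)$ by $c_h(H^s)$ cannot introduce new spectrum values into it'' does not follow from the gap hypothesis: $\gap(H^1)>c_h$ constrains only the distance between \emph{distinct} points of $\tCS(H^1)$, not the distance from $\tCS(H^1)$ to the endpoints of $I$. For instance, if $\tCS(H^1)=\{0\}$ (so $\gap(H^1)=\infty$), $c_h=3/4$, and $I=(-1,-1/2)$, then $I$ contains no spectrum but $I+c_h=(-1/4,1/4)$ contains $0$, and the quotient--inclusion map $\tHF^I(H^1)\to\tHF^{I+c_h}(H^1)$ is not an isomorphism. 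Your fallback subdivision route is also problematic: nothing ensures that the intermediate Hamiltonians $H^{s_0}$ satisfy \eqref{eq:gap}, so the augmented action filtration on their Floer complexes need not even be defined. If you look at the proofs of Theorems~\ref{thm:homology-chi} and~\ref{thm:homology-hyperbolic}, the mechanism actually used to eliminate the shift is the stronger condition that the \emph{endpoints} of $I$ lie at distance greater than the shift from the relevant spectrum; under that hypothesis the quotient--inclusion map is indeed an isomorphism, and the second clause of the proposition is invoked there only as a parenthetical remark with that understanding.
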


Note that here the shift $c_h(H^s)$ can be replaced by any constant
$a>c_h(H^s)$. The proof of the proposition is standard and we omit
it. Here we only mention that the first term in \eqref{eq:shift} is
the maximal action shift induced by the homotopy (see, e.g., again
\cite[Sect.\ 3.2.2]{Gi:coiso}) and the second term is the maximal mean
index shift, as can be seen from an argument similar to the proof of
Proposition \ref{prop:action-filt}.

\begin{Remark} The arguments from this section carry over to
  contractible periodic orbits, i.e., to the case where $\ff=1$, with
  some simplifications and straightforward modifications. Namely, in
  this case, it is enough to assume that $M$ is monotone or negative
  monotone to have the augmented action defined; see
  \cite{GG:gaps}. An analogue of \eqref{eq:gap} is still sufficient to
  ensure that the Floer complex and the homology are filtered by the
  augmented action and Propositions \ref{prop:action-filt},
  \ref{prop:action-filt2} and \ref{prop:homotopy} still hold.
\end{Remark}

\section{Proofs}
\label{sec:proofs}
With the action filtration introduced, we are now in a position to
prove the main results of the paper.

\subsection{Proofs of Theorems \ref{thm:homology-chi} and
  \ref{thm:atoro}}
\label{sec:prfs1}

\begin{proof}[Proof of Theorem \ref{thm:homology-chi}: the case
  $\lbrack \ff\rbrack\neq 0$]

  Since $\PP_1(H,[\ff])$ is finite, only finitely many distinct free
  homotopy classes $\ff_i\in \tpi_1(M)$ occur as the free homotopy
  classes of one-periodic orbits of $H$ in the homology class
  $[\ff]$. We claim that then, for a sufficiently large prime $p$, the
  classes $\ff_i^p$ are also distinct.

  To see this, first note that for any two elements $g\neq h$ in any
  group there is at most one prime $p$ such that $g^p=h^p$. Indeed,
  assume that there are two such distinct primes $p$ and $q$. Then,
  since $p$ and $q$ are relatively prime, $ap+bq=1$ for some integers
  $a$ and $b$. Hence,
$$
g=(g^p)^a (g^q)^b=(h^p)^a (h^q)^b=h,
$$
which is impossible since $g\neq h$. Clearly, the same is true for
conjugacy classes. As a consequence, for any finite collection of
distinct conjugacy classes, their large prime powers are also
distinct.

Throughout the proof we will always require $p$ to be a sufficiently
large prime to satisfy the above condition for the collection
$\ff_i$. (Later we will need to impose additional lower bounds on
$p$.)

Let us assume that $H$ has no simple $p$-periodic orbits in the class
$\ff^p$. Our goal is to show that it has a simple $p'$-periodic orbit,
where $p'$ is the first prime greater than $p$, in the homotopy class
$\ff^p$.

Then all $p$-periodic orbits in $\ff^p$ are the $p$th iterations of
one-periodic orbits, since $p$ is prime. Furthermore, by the above
requirement on $p$, these one-periodic orbits are necessarily in the
free homotopy class $\ff$. Thus we have
\begin{equation}
\label{eq:spec-p}
\tCS(H^{\nat p},\ff^p)=p\,\tCS(H,\ff)
\end{equation}
with respect to the $p$th iteration of the reference loop $z\in\ff$
and of the trivialization of $TM|_z$. As a consequence,
\begin{equation}
\label{eq:gap-p}
\gap(H^{\nat p},\ff^p)=p\gap(H,\ff),
\end{equation}
and the augmented action filtration on the Floer homology
$\tHF(H^{\nat p},\ff^p)$ is defined once $p$ is so large that
$p\gap(H,\ff)>c_0(M)$; see \eqref{eq:C0} and Section
\ref{sec:filtration}.  We also have
\begin{equation}
\label{eq:spec-p-I}
\tCS(H^{\nat p},\ff^p)\cap pI=p(\tCS(H,\ff)\cap I).
\end{equation}
Here $pI=(pa,\,pb)$ for $I=(a,\,b)$.

Next we claim that, when $p$ is sufficiently large,
\begin{equation}
\label{eq:chi-iter}
\chi(H^{\nat p},pI,\ff^p)=\chi(H, I,\ff).
\end{equation}

To see this, denote by $x_i$ the one-periodic orbits of $H$ in the
class $\ff$ and with augmented action in $I$. This is a finite
collection of orbits since $\PP_1(H,[\ff])$ is finite.  Then all
sufficiently large primes $p$ are admissible in the sense of
\cite{GG:gap} for all orbits $x_i$, i.e., 1 has the same multiplicity
as a generalized eigenvalue of the linearized return maps $d\varphi_H$
and $d\varphi_H^p$ at $x_i$ and the two maps have the same number of
eigenvalues in $(-1,\,0)$. (Indeed, it suffices to require $p$ to be
larger than $2$ and larger than the degree of any root of unity among
the eigenvalues of $d\varphi_H$ at $x_i$.) By the Shub--Sullivan
theorem (see \cite{CMPY, SS}), the orbits $x_i$ and $x_i^p$ have the
same Poincar\'e--Hopf index. The orbits $x_i^p$ are the only
$p$-periodic orbits of $H$ in $\ff^p$ with augmented action in $pI$,
and \eqref{eq:chi-iter} follows. Alternatively, one can argue as in
the proof of the case  $\ff\neq 1$ of the theorem; see below.

By \eqref{eq:chi-hom} and since $\chi(H, I,\ff)\neq 0$, we conclude
that
$$
\tHF^{pI}(H^{\nat p},\ff^p)\neq 0.
$$

Now we are in a position to show that $H$ must have at least one
$p'$-periodic orbit in the class $\ff^p$, where $p'$ is the first
prime greater than $p$, provided again that $p$ is sufficiently
large. Then, as the last step of the proof, we will show that this
$p'$-periodic orbit is necessarily simple.

Arguing by contradiction, assume that there are no such orbits. Then
$$
\gap(H^{\nat p'},\ff^p)=\infty,
$$
and, obviously, the augmented action filtration is defined on the
Floer homology for $H^{\nat p'}$ and $\ff^p$. (Of course, the
resulting complex and the homology is zero for any augmented action
interval, but this is not essential at this point.) By roughly
following the line of reasoning from \cite{Gu:nc, Gu:hq} and relying
on the fact that the filtered homology is defined, we will show that
the homology is non-trivial for a certain augmented action interval
and thus arrive at a contradiction with the assumption that $H$ has no
$p'$-periodic orbits in the class $\ff^p$.

Set 
$$
e_+ = \max\left\{ \int_{S^1} \max_M H_t\,dt,\,0\right\} 
$$
and 
$$
e_- =\max\left\{-\int_{S^1} \min_M H_t\,dt,\,0\right\}.
$$
Then 
$$
a_\pm:=(p'-p)e_\pm +|\lambda|n\geq c_\pm,
$$ 
where the constants $c_\pm=c_h(H^s)$ are defined by \eqref{eq:shift}
for the linear homotopies from $H^{\nat p}$ to $H^{\nat p'}$ and from
$H^{\nat p'}$ to $H^{\nat p}$.

Furthermore, recall that $p'-p=o(p)$ as $p\to\infty$; see, e.g.,
\cite{BHP}. Thus, when $p$ is sufficiently large, we have
$$
\gap(H^{\nat p},\ff^p)=p \gap(H,\ff)>a_\pm\geq c_\pm .
$$
Hence, the conditions of Proposition \ref{prop:homotopy} are
satisfied, and the continuation maps
$$
\tHF^{pI}(H^{\nat p},\ff^p)\to \tHF^{pI+a_+}(H^{\nat p'},\ff^p)
$$
and
$$
\tHF^{pI+a_+}(H^{\nat p'},\ff^p)\to \tHF^{pI+a_++a_-}(H^{\nat p},\ff^p)
$$
are defined.

Consider now the following commutative diagram:
$$
\xymatrix{
\tHF^{pI}\big( H^{\nat p}, \ff^p\big)
\ar[d] \ar[rrd]^\cong \\ 
\tHF^{pI+ a_+ }
\big(H^{\nat p'}, 
\ff^p\big)
\ar[rr]& &
\tHF^{pI+ a_+ + a_-}
\big(H^{\nat p}, 
\ff^p\big)
}
$$
Here the diagonal map is an isomorphism. To see this, denote by
$\delta>0$ the distance from the end points of $I$ to
$\tCS(H,\ff)$. Then the distance from the end points of $pI$ to
$\tCS(H^{\nat p},\ff^p)$ is $p\delta$ and, when $p$ is large,
$$
p\delta> a_+ +a_-
$$
again because $p'-p=o(p)$. Hence, the intervals
$(pI+a_+ + a_-)\setminus pI$ and $pI\setminus (pI+a_+ + a_-)$ contain
no points of $\tCS(H^{\nat p},\ff^p)$, and the diagonal map is indeed
an isomorphism. (In fact, this argument shows that, as in the second
part of  Proposition \ref{prop:homotopy}, one can eliminate the shifts
$a_\pm$ and $a_++a_-$ in the continuation maps when $p$ is
sufficiently large.)

Moreover, as we have shown above, $\tHF^{pI}(H^{\nat p},\ff^p)\neq 0$.
Therefore, the middle group $\tHF^{pI+ a_+ }(H^{\nat p'}, \ff^p)$ in
the diagram is also non-trivial, and thus $H$ must have a
$p'$-periodic orbit in the homotopy class $\ff^p$.

It remains to show that this orbit is necessarily simple. However,
otherwise, it would be the $p'$th iteration of a one-periodic orbit in
the homology class $p[\ff]/p'$. This is impossible because $p[\ff]/p'$
is not an integer homology class when $p$, and hence $p'$, are large
since $[\ff]\neq 0$. This completes the proof of the case
$[\ff]\neq 0$ of the theorem.
\end{proof}

\begin{proof}[Proof of Theorem \ref{thm:homology-chi}: the case
  $\ff\neq 1$] The proof follows the same path as in the case where
  $[\ff]\neq 0$, and here we only indicate the necessary changes in
  the argument.

  The key to the proof is the fact that when $\pi_1(M)$ is a torsion
  free hyperbolic group and $\ff\neq 1$ in $\tpi_1(M)$ there exists a
  constant $r(\ff)\in \N$ such that the equation
$$
\ff^p=\fh^q
$$
in $\tpi_1(M)$, where $p$ and $q$ are primes greater than $r(\ff)$ and
$\fh\in\tpi_1(M)$, is satisfied only when $\fh=\ff$ and $p=q$.

To see this, we first note that it is sufficient to prove this fact
for $\pi_1(M)$ rather than $\tpi_1(M)$. In other words, given
$f\in\pi_1(M)$, $f\neq 1$, we need to show that $f^p=h^q$ for
sufficiently large primes $p$ and $q$ (depending on $f$) only when
$h=f$ and $p=q$. To this end, recall that for any hyperbolic group
$G$, every element $f\in G$ of infinite order is contained in a unique
maximal virtually cyclic subgroup $E(f)$ and
$$
E(f)=\{ g\in G\mid g^{-1}f^l g=f^{\pm l}\textrm{ for some } l\in\N\};
$$
see \cite{Ol} (and also \cite{Gr}). Applying this to $f\neq 1$ in
$G=\pi_1(M)$, which is also assumed to be torsion free, and using the
fact that a torsion free and virtually cyclic group is cyclic, we
conclude that $E(f)$ is infinite cyclic, i.e., $\Z$. Furthermore,
$h\in E(f)$ as a consequence of the condition $f^p=h^q$. Indeed,
$$
h^{-1}f^p h=h^{-1}h^q h= h^q=f^p.
$$
This reduces the question to the case where $f$ and $h$ belong to the
infinite cyclic group $E(f)$, and in this case the result is
obvious.\footnote{The authors are grateful to Denis Osin for this
  argument.}

From now on, we require that $p\geq r(\ff)$. (Later we will need to
introduce additional lower bounds for $p$.) As in the proof of the
first case of the theorem, assume that $H$ has no simple $p$-periodic
orbits in the class $\ff^p$. Then every $p$-periodic orbit is the
$p$th iteration of a one-periodic orbit and, by the above observation
(with $p=q$), this one-periodic orbit must also be in the class
$\ff$. Clearly, \eqref{eq:spec-p}, \eqref{eq:gap-p}, and
\eqref{eq:spec-p-I} still hold.

Furthermore, \eqref{eq:chi-iter} also holds, i.e.,
$\chi(H^{\nat p},pI,\ff^p)=\chi(H,I,\ff)$, although now the reason is
slightly different. Consider the set $F$ of the initial conditions
$x(0)$ for all one-periodic orbits of $H$ in the class $\ff$ and with
augmented action in $I$. Since the end points of $I$ are outside
$\tCS(H,\ff)$, the set $F$ is closed. Under a small non-degenerate
perturbation $\tH$ of $H$, the set $F$ splits into a finite collection
of the initial conditions of the orbits $\tilde{x}_i$ of $\tH$ in
$\ff$ with augmented action in $I$, and $\chi(H,I,\ff)$ is the sum of
the Poincar\'e--Hopf indices of the orbits $\tilde{x}_i$. We can
furthermore ensure that there are no roots of unity among the Floquet
multipliers of these orbits.  By our assumptions, $F$ is also the set
of the initial conditions for all $p$-periodic orbits of $H$ in the
class $\ff^p$ with augmented action in $pI$. If $\tH$ is sufficiently
close to $H$, the only $p$-periodic orbits of $\tH$ in $\ff^p$ with
augmented action in $pI$ are $\tilde{x}_i^p$. Hence,
$\chi(H^{\nat p},pI,\ff^p)$ is the sum of the Poincar\'e--Hopf indices
of the orbits $\tilde{x}_i^p$.  When $p>2$, the orbits $\tilde{x}_i$
and $\tilde{x}_i^p$ have the same Poincar\'e--Hopf index due to the
assumption that none of the Floquet multipliers is a root of unity.
As a consequence, we have \eqref{eq:chi-iter}.

The rest of the proof is identical to the argument in the case where
$[\ff]\neq 0$ except for the very last step.  Thus we have proved the
existence of a $p'$-periodic orbit $x$ in the class $\ff^p$ and now
need to show that this orbit is simple. Assume the contrary. Then,
since $p'$ is prime, $x$ is necessarily the $p'$th iteration of a
one-periodic orbit in some class $\fh$. We have
$$
\ff^p=\fh^{p'},
$$
and hence $p'=p$ when $p>r(\ff)$. This is impossible since $p'$ is the
first prime greater than $p$.
\end{proof}

Turning to Theorem \ref{thm:atoro}, note that, as in \cite{Gu:nc}, a
more general result holds. Namely, recall that the local Floer
homology $\HF(x)$ is associated to an isolated periodic orbit $x$ of
$H$. The group $\HF(x)$, already mentioned in Section
\ref{sec:filtration}, is roughly speaking the homology of the Floer
complex generated by the orbits which $x$ splits into under a
non-degenerate perturbation; see, e.g., \cite{GG:gap} for more
details. In particular, when $x$ is non-degenerate, $\HF(x)$ is equal
to the ground ring and concentrated in degree $\MUCZ(x)$. We have the
following generalization of Theorem~\ref{thm:atoro}:

\begin{Theorem}
\label{thm:atoro2}
Assume that the class $[\omega]$ is atoroidal and let $H$ be a
Hamiltonian having an isolated one-periodic orbit $x$ with homotopy
class $\ff$ such that $\HF(x)\neq 0$ and that $[\ff] \neq 0$ in
$\H_1(M;\Z)/\Tor$ and $\Pp_1(H,[\ff])$ is finite. Then, for every
sufficiently large prime $p$, the Hamiltonian $H$ has a simple
periodic orbit in the homotopy class $\ff^p$ and with period either
$p$ or $p'$, where $p'$ is the first prime greater than $p$. Moreover,
when $\pi_1(M)$ is hyperbolic and torsion free, the condition
$[\ff]\neq 0$ can be replaced by $\ff\neq 1$.
\end{Theorem}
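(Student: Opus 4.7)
The plan is to mimic the proof of Theorem \ref{thm:homology-chi}, replacing its Euler-characteristic mechanism by the iteration invariance of local Floer homology at admissible primes. The conceptual simplification enabled by the atoroidal hypothesis ($\lambda=0$) is that the augmented action reduces to the ordinary action, so the action filtration on $\CF(H,\ff)$ is \emph{strict}, and in particular the action is defined on the free loop itself rather than on cappings.

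The first step would be to upgrade the local hypothesis $\HF(x)\neq 0$ to non-triviality of an ambient filtered Floer group. Because $\Pp_1(H,[\ff])$ is finite and the action is capping-independent, $\CS(H,\ff)$ is a finite subset of $\R$. After a small compactly supported perturbation of $H$ away from a neighborhood of $x$, we may assume $x$ is the unique one-periodic orbit in $\ff$ with action equal to $a:=\CA_H(x)$; this preserves $\HF(x)$ since it is a local invariant. Then for a small interval $I=(a-\eps,a+\eps)$ containing no other element of $\CS(H,\ff)$, strictness of the filtration together with the standard local-to-global spectral sequence (see, e.g., \cite{GG:gap}) yields $\HF^I(H,\ff)=\HF(x)\neq 0$.

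The remainder of the argument would run in parallel with the proof of Theorem \ref{thm:homology-chi}. Fix a prime $p$ that is admissible for every orbit in $\Pp_1(H,[\ff])$, exceeds $r(\ff)$ in the hyperbolic case, and is large enough that the gap $p\gap(H,\ff)$ dominates the continuation shifts $a_\pm$ coming from the linear homotopies between $H^{\nat p}$ and $H^{\nat p'}$. Assuming $H$ has no simple $p$-periodic orbit in $\ff^p$, the conjugacy-class argument from the proof of Theorem \ref{thm:homology-chi} (homological when $[\ff]\neq 0$, or group-theoretic via the virtually cyclic subgroup $E(f)$ in the torsion-free hyperbolic case) shows that every $p$-periodic orbit in $\ff^p$ is a $p$-th iterate of some $y\in\Pp_1(H,\ff)$, so $\CS(H^{\nat p},\ff^p)=p\,\CS(H,\ff)$. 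By the iteration invariance of local Floer homology at admissible primes, $\HF(x^p)\cong\HF(x)\neq 0$, and the same argument as in the first step applied to $H^{\nat p}$ gives $\HF^{pI}(H^{\nat p},\ff^p)\supset\HF(x^p)\neq 0$.

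The endgame is then a verbatim repetition of the one used for Theorem \ref{thm:homology-chi}: if $H$ also had no $p'$-periodic orbit in $\ff^p$, then $\HF(H^{\nat p'},\ff^p)=0$, whereas the commutative triangle of continuation maps $H^{\nat p}\to H^{\nat p'}\to H^{\nat p}$---whose slanted side is an isomorphism for $p$ large because $p'-p=o(p)$ while $p\gap(H,\ff)$ grows linearly---would force the nonzero class in $\HF^{pI}(H^{\nat p},\ff^p)$ to factor through a zero group, a contradiction. Simplicity of the resulting $p'$-orbit in $\ff^p$ is inherited as before, from $p[\ff]/p'\notin\H_1(M;\Z)/\Tor$ in the homological case, or from $\fh^{p'}=\ff^p$ and $p>r(\ff)$ forcing $p'=p$ in the hyperbolic case. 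The main obstacle I anticipate is the first step: cleanly extracting ambient non-triviality of $\HF^I(H,\ff)$ from the purely local hypothesis $\HF(x)\neq 0$, without losing control of $x$ under the auxiliary perturbation. This is the point where the strict filtration available in the atoroidal setting (and absent in the setting of Theorem \ref{thm:homology-chi}) is indispensable.
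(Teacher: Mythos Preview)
Your proposal is correct and matches the paper's own sketch essentially step for step: exploit the strict action filtration available when $\lambda=0$, pass from $\HF(x)\neq 0$ to $\HF^{I}(H,\ff)\neq 0$ and then, via persistence of local Floer homology under admissible iteration (the paper cites \cite{GG:gaps}), to $\HF^{pI}(H^{\nat p},\ff^p)\neq 0$, and finally run the continuation-triangle and simplicity arguments of Theorem~\ref{thm:homology-chi} verbatim. Regarding the obstacle you flag in the first step, the auxiliary perturbation is unnecessary and best avoided: in the strict (atoroidal) setting, distinct isolated one-periodic orbits with the same action value cannot be connected by a Floer trajectory (the energy would vanish), so $\HF^{I}(H,\ff)$ already contains $\HF(x)$ as a direct summand with no modification of $H$---this is exactly what the paper's sketch asserts when it says $\HF^{I}(H,\ff)\neq 0$ once $x$ and $\CA_H(x)$ are isolated with $\HF(x)\neq 0$.
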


The main new point here is the ``moreover'' part of the theorem.  The
case of the theorem where $[\ff]\neq 0$, proved in \cite{Gu:nc}, is
included for the sake of completeness. The proof of the ``moreover''
part is a combination of the proof of \cite[Thm.\ 3.1]{Gu:nc} and the
proof of the case $\ff\neq 1$ of Theorem \ref{thm:homology-chi}. Here
we only briefly touch upon this argument.

\begin{proof}[On the proof of Theorem \ref{thm:atoro2}]
  The key difference between the settings of Theorems \ref{thm:atoro}
  and \ref{thm:atoro2} and that of Theorem \ref{thm:homology-chi} is
  that now the class $[\omega]$ is atoroidal and thus we have the
  standard action filtration $\HF^I(H;\ff)$ on the Floer homology of
  $H$ rather than the augmented action filtration. On the level of
  complexes, the action filtration is strictly monotone, i.e., the
  differential is strictly action decreasing. (In contrast, the
  augmented action is only non-increasing; see the discussion in
  Section \ref{sec:filtration}.)  As a consequence,
  $\HF^I(H;\ff)\neq 0$ when $I$ is a small interval centered at the
  action $\CA_H(x)$ whenever $x$ and $\CA_H(x)$ are isolated and
  $\HF(x)\neq 0$. Furthermore, when $H$ has no simple $p$-periodic
  orbits in the class $\ff^p$, we have
  $\HF^{pI}(H^{\nat p};\ff^p)\neq 0$ by the persistence of the local
  Floer homology results from \cite{GG:gaps}. With this in mind, one
  argues essentially word-for-word as in the proof of Theorem
  \ref{thm:homology-chi}, with some straightforward
  simplifications. We omit the details. \end{proof}

\subsection{Proof of Theorem \ref {thm:homology-hyperbolic}}
Arguing by contradiction, assume that $H$ has only finitely many
simple periodic orbits with homotopy class in
$\ff^\N=\{\ff^k\mid k\in\N\}$, where $\ff=\llbracket x \rrbracket$. We
denote these orbits by $x_j$ and let $k_j$ be the period of $x_j$. Set
$F=H^{\nat 2k_0}$ and $y_j=x_j^{2k_0/k_j}$, where $k_0$ is the least
common multiple of the periods $k_j$. The orbits $y_j$ are the
one-periodic orbits of $F$.

We claim that for all $k\in\N$ every $k$-periodic orbit $z$ of $F$ in
the collection of the homotopy classes $\ff^\N$ is the $k$th iteration
of one of the orbits $y_j$. Indeed, then $z$ is also a
$2kk_0$-periodic orbit of $H$ in $\ff^\N$. Hence,
$z=x_j^{2kk_0/k_j}=y_j^k$ for some $j$.

Thus we have a collection of free homotopy classes $\ff^{\N}$
generated by $\ff\in \tpi_1(M)$, and a Hamiltonian $F$ with finitely
many one-periodic orbits $y_j$ in $\ff^\N$ and no other simple
periodic orbits in $\ff^\N$. One of these orbits, $h$, is an even
iteration of the original hyperbolic orbit. Hence, $h$ is hyperbolic
with an even number of eigenvalues in $(-1,\,0)$.

From now on we focus on the Hamiltonian $F$ and its periodic orbits.
Set $\fh=\llbracket h \rrbracket$; clearly, $\fh^\N\subset \ff^\N$.

Let us fix reference curves and trivializations for the collection
$\fh^\N$. Namely, it is convenient to take $h$ as a reference curve
for $\fh$. Then the reference trivialization is fixed by the condition
that $\Delta_F(\bh)=0$, where $\bh$ stands for $h$ equipped with
the ``identity'' capping. (Such a trivialization exists since $h$ is
hyperbolic and has an even number of eigenvalues in $(-1,\,0)$, and
hence the mean index of $h$ with respect to any trivialization is an
even integer.)  The class $\fh^k$ is then given the iterated reference
curve $h^k$ and the ``iterated'' trivialization.  We fix cappings of
the orbits $y_j$, suppressed in the notation, and equip the iterated
orbits with ``iterated'' cappings.  As a consequence, the action, the
mean index, and the augmented action are homogeneous under iterations
for periodic orbits in $\fh^\N$.  (It is essential here that all free
homotopy classes in $\fh^\N$ are distinct and nontrivial, and hence
the reference curve and the trivialization are well defined.)

Without loss of generality, by adding if necessary a constant to $F$,
we can ensure that $\tA_F(h)=0$.

By our assumptions, we have 
$$
\tCS(F^{\nat k},\fh^k)=k\tCS(F,\fh)
$$
and 
$$
\gap(F^{\nat k},\fh^k)=k\gap(F,\fh).
$$
It follows that the augmented action filtered Floer homology of
$F^{\nat k}$ is defined when $k$ is large enough.

Furthermore, let $I$ be an interval such that $0=\tA_F(h)$ is the only
point in $\tCS(F,\fh)\cap I$, and the end points of $I$ are not in
$\tCS(F,\fh)$. Then we also have
\begin{equation}
\label{eq:kI}
\tCS(F^{\nat k},\fh^k)\cap kI=\{0\}
\end{equation}
and the end points of $kI$ are outside $\tCS(F^{\nat k},\fh^k)$.

\begin{Lemma}
\label{lemma:hom}
$\tHF^{k_i I}(F^{\nat k_i}, \fh^{k_i})\neq 0$ for some sequence
$k_i\to\infty$.
\end{Lemma}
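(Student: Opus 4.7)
I would argue by contradiction: assume that $\tHF^{kI}(F^{\nat k}, \fh^k) = 0$ for every sufficiently large $k$. The first step is to identify the chain complex. By the reduction made above, every simple periodic orbit of $F$ in $\ff^\N$ is one of the one-periodic orbits $y_j$, so the only $k$-periodic orbits of $F$ in $\fh^k$ are the iterates $y_l^k$ with $y_l \in \fh$. Combining this with the homogeneity $\tA_{F^{\nat k}}(y_l^k) = k\tA_F(y_l)$ and \eqref{eq:kI}, the generators of $\tCF^{kI}(F^{\nat k}, \fh^k)$ are exactly those $y_l^k$ with $\tA_F(y_l) = 0$: a finite list that includes $h$.

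Because $h$ is hyperbolic with an even number of eigenvalues in $(-1, 0)$ and with $\Delta_F(\bh) = 0$ in our trivialization, every iterate $h^k$ is non-degenerate hyperbolic, still has $\Delta_{F^{\nat k}}(\bh^k) = 0$, and, via the eigenvalue-parity hypothesis, still has $\MUCZ(\bh^k) = 0$. Thus the local Floer homology $\HF(h^k) \cong R$ sits in $\Z/2\NT$-degree $0$ for every $k$; this is the summand of the $E^1$-page of the spectral sequence from Section \ref{sec:filtration} (computing $\tHF^{kI}(F^{\nat k}, \fh^k)$ from the local Floer homologies of the generators) that I aim to see survive.

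The core of the proof couples the ball-crossing theorem \cite[Thm.\ 3.1]{GG:hyperbolic} with a Kronecker-type equidistribution of the iterated mean indices. Ball-crossing yields an iteration-independent $E_0 > 0$ such that no non-constant Floer trajectory of $F^{\nat k}$ asymptotic to $h^k$ has energy below $E_0$. Because every generator $y_l^k$ of $\tCF^{kI}$ satisfies $\tA = 0$, its ordinary action (relative to a chosen capping of $h^k$ and with the iterated capping) equals $(\lambda/2)k\Delta_F(\bar{y}_l)$ up to the shift $\lambda\NT\Z$, so the permissible energies of Floer connections between $h^k$ and $y_l^k$ fill a discrete collection of ``capping bands''. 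I would then select a sequence $k_i \to \infty$ such that $\bigl(k_i\Delta_F(\bar{y}_l) \bmod 2\NT\bigr)_{l \ge 2}$ lies in a prescribed target in the closed subgroup of $(\R/2\NT\Z)^{N-1}$ generated by the $\Delta_F(\bar{y}_l)$; the existence of such $k_i$ follows from Weyl equidistribution, and the hypothesis $\NT \ge n/2+1$ enters precisely to make the desired target non-empty. The target is chosen so that every potential differential into or out of $h^{k_i}$ originating from another generator $y_l^{k_i}$ is prohibited: either the $\Z/2\NT$-degree of $y_l^{k_i}$ is not adjacent to $0$ (using $|\MUCZ - \Delta| \le n$), or every permitted capping band for the pair $(h^{k_i}, y_l^{k_i})$ is ruled out by the energy bound $E \ge E_0$. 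The class of $h^{k_i}$ therefore descends to a nonzero element of $\tHF^{k_i I}(F^{\nat k_i}, \fh^{k_i})$, contradicting the assumption and proving the lemma.

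The principal obstacle is the simultaneous orchestration of these two ingredients. The mild hypothesis $\NT \ge n/2+1$ is too weak for any naive degree-only separation (which would want $\NT \ge n+2$); conversely, ball-crossing alone cannot exclude all recapping classes at once, since the energies within a band grow only like $\lambda\NT m$ in the recapping index $m$. The delicate point is therefore to arrange the target region for the equidistribution so that the degree obstruction and the energy obstruction jointly kill every potential cancellation of $h^{k_i}$ in the filtered complex.
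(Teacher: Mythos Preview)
Your overall strategy matches the paper's: combine the ball-crossing energy bound with a Kronecker recurrence for the iterated mean indices, and invoke $\NT \geq n/2+1$ to separate $h^k$ from the other generators in degree. All the right ingredients are present.

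The gap is that you never commit to a target for the equidistribution, and your final paragraph effectively concedes this. The paper's choice is the simplest possible: the target is \emph{zero}. One picks $k_i$ so that $k_i\Delta_F(y_j)\bmod 2\NT$ lies within $\eps$ of $0$ for every $j$ simultaneously; since $0$ always belongs to the closure subgroup $\Gamma\subset(\R/2\NT\Z)^m$, Kronecker applies with no extra hypothesis. In particular, your claim that $\NT\geq n/2+1$ is what makes the target nonempty is not where that condition enters. After passing to a non-degenerate perturbation $G$ of $F^{\nat k}$ kept equal to $F^{\nat k}$ on a neighborhood $U$ of $h$ (a step you omit but which is needed since the $y_j$ may be degenerate), every capped orbit $\bz$ arising from the remaining generators has $\Delta_G(\bz)$ within $\eps$ of some $2m\NT$, $m\in\Z$. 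Now suppose a Floer trajectory of relative index $\pm 1$ connected $\bh^k$ and $\bz$. Then $|\MUCZ(\bz)|=1$, so $|\Delta_G(\bz)|\le n+1$ by \eqref{eq:mean-CZ}; combined with $2\NT\geq n+2$ this forces $m=0$, hence $|\Delta_G(\bz)|<\eps$. Since $\tA_G(z)\approx 0$ and (after normalizing to $\lambda/2=1$) $\CA_G=\tA_G+\Delta_G$, one gets $|\CA_G(\bz)|<\eps+\eta$, contradicting the ball-crossing lower bound $|\CA_G(\bz)|>e$ as soon as $\eps<e$ and the perturbation is small enough. So the energy obstruction disposes of the single recapping $m=0$ and the degree obstruction disposes of all $m\neq 0$; no further ``orchestration'' is required, and $\NT\geq n/2+1$ enters only in the degree step.
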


Assuming the lemma, let us finish the proof of the theorem.  Similarly
to the proof of Theorem \ref{thm:homology-chi}, set
$$
a_+ = \max\left\{ \int_{S^1} \max_M F_t\,dt,\,0\right\} +|\lambda|n
$$
and 
$$
a_- =\max\left\{-\int_{S^1} \min_M F_t\,dt,\,0\right\}+ |\lambda|n .
$$
These constants are greater than or equal to the constants $c_h$ given
by \eqref{eq:shift} for the linear homotopies from $F^{\nat k}$ to
$F^{\nat (k+1)}$ and from $F^{\nat (k+1)}$ to $F^{\nat k}$, and
denoted again by $c_\pm$. Thus, when $k$ is sufficiently large,
$$
\gap(F^{\nat k},\fh^k)=k \gap(F,\fh)>a_\pm\geq c_\pm .
$$
Hence, by Proposition \ref{prop:homotopy}, the continuation maps
$$
\tHF^{kI}(F^{\nat k},\fh^k)\to \tHF^{kI+a_+}(F^{\nat (k+1)},\fh^k)
$$
and
$$
\tHF^{kI+a_+}(F^{\nat (k+1)},\fh^k)\to \tHF^{kI+a_++a_-}(F^{\nat k},\fh^k)
$$
are defined. 

Let $\delta>0$ be the distance from the end points of $I$ to
$\tCS(F,\fh)$. Then the distance from the end points of $kI$ to
$\tCS(H^{\nat k},\fh^k)$ is $k\delta$. When $k$ is large,
$k\delta>a_++a_-$, and the intervals $(kI+a_+ + a_-)\setminus kI$ and
$kI\setminus (kI+a_+ + a_-)$ contain no points of
$\tCS(F^{\nat k},\fh^k)$. Thus the natural quotient-inclusion map
$$
\tHF^{kI}(F^{\nat k},\fh^k)\to \tHF^{kI+a_++a_-}(F^{\nat k},\fh^k)
$$
is an isomorphism.

Let now $k$ be one of the sufficiently large entries in the sequence
$k_i$ from Lemma \ref{lemma:hom}.  Consider the following commutative
diagram:
$$
\xymatrix{
\tHF^{k I}\big( F^{\nat k}, \fh^{k}\big)
\ar[d] \ar[rrd]^\cong \\ 
\tHF^{k I+ a_+ }
\big(F^{\nat (k+1)}, 
\fh^{k}\big)
\ar[rr]& &
\tHF^{k I+ a_+ + a_-}
\big(F^{\nat k}, 
\fh^{k}\big) 
}
$$
Here the diagonal map is an isomorphism and, by Lemma \ref{lemma:hom},
$\tHF^{kI}(F^{\nat k},\fh^{k})\neq 0$. Therefore, the middle group
$\tHF^{kI+ a_+ }(F^{\nat (k+1)}, \fh^{k})$ in the diagram is also
non-trivial, and $F$ has a $(k+1)$-periodic orbit $z$ in the homotopy
class $\fh^{k}$.

We have $z=y_j^{{k}+1}$ for some $j$. Furthermore, $\fh=\ff^a$ and
$\llbracket y_j \rrbracket=\ff^b$ for some $a$ and $b$ in $\N$. Thus
$\ff^{ak}=\ff^{b(k+1)}$. Since all homotopy classes in $\ff^\N$ are
distinct, we infer that $ak=b(k+1)$, where $a$ is independent of
$k$. This is clearly impossible for $k\geq a$ since $k$ and $k+1$ are
relatively prime, and we have arrived at a contradiction.  To finish
the proof of the theorem, it remains to prove the lemma.

\begin{proof}[Proof of Lemma \ref{lemma:hom}] Throughout the proof, it
  is convenient to interpret the iterated Hamiltonian $F^{\nat k}$ as
  the $k$-periodic Hamiltonian $F_t$ with $t\in\R/k\Z$.  Furthermore,
  without loss of generality, we can ensure that $\lambda/2=1$ by
  rescaling $\omega$, and thus $\tA_F=\CA_F-\Delta_F$.  Since
  $\tA_F(h)=0$ and $\Delta_F(\bh)=0$, we also have
\begin{equation}
\label{eq:h-index}
\CA_F(\bh)=0=\Delta_F(\bh).
\end{equation}
Recall that, by our assumptions, $0$ is the only point of the action
spectrum $\tCS(F^{\nat k},\fh^k)$ in the interval $kI$ (see
\eqref{eq:kI}) and that $y_j^k$ are the only $k$-periodic orbits of
$F$.

Fix a neighborhood $U$ of $h$ which does not intersect any of the
other orbits $y_j$ and a small parameter $\eps>0$, depending on $U$,
to be specified later. There exists a sequence $k_i\to\infty$ such
that for all $j$ and $k_i$, we have
\begin{equation}
\label{eq:eps}
\big\|\Delta_{F^{\nat k_i}}(y_j^{k_i})\big\|_{2\NT}<\eps,
\end{equation}
where $\|a\|_{2\NT}$ stands for the distance from
$a\in S^1_{2\NT}=\R/2\NT\Z$ to $0$ or, equivalently, from $a\in\R$ to
the nearest point in the lattice $2\NT\Z\subset \R$. Here we can treat
the mean index
$$
\Delta_{F^{\nat k_i}}(y_j^{k_i})=k_i\Delta_F(y_j)
$$ 
as a real number when $y_j$ is capped or as a point in $S^1_{2\NT}$
when the capping is discarded.

To prove \eqref{eq:eps}, consider the torus $\T^m=(S^1_{2\NT})^m$
where $m$ is the number of the orbits $y_j$ and set
$$
\Delta=\big(\Delta_F(y_1),\ldots,\Delta_F(y_m)\big)\in\T^m .
$$
The closure $\Gamma$ of the orbit $\{k\Delta\mid k\in\Z\}$ is a
subgroup in $\T^m$ and, for every $k_0$, the set
$\{k\Delta\mid k>k_0\}$ is dense in $\Gamma$. Hence, the point
$k\Delta$ is within the $\eps$-neighborhood of $0\in\Gamma$ for
infinitely many values of $k$.

From now on $k$ will stand for one of the entries in the sequence
$k_i$.

Let $G$ be a $C^\infty$-small, non-degenerate perturbation of
$F^{\nat k}$ equal to $F^{\nat k}$ on the neighborhood $U$. The orbits
$y_j^k$, other than $h^k$, split into a finite collection of
non-degenerate orbits of $G$ in the class $\fh^k$. Among these we are
interested exclusively in the orbits with augmented action in
$kI$. These orbits can only come from the orbits $y_j^k$ with action
in $kI$, i.e., by \eqref{eq:kI}, from the orbits $y_j$ with
$\tA_F(y_j)=0$. We denote the resulting orbits of $G$ by $z_j$. (The
number of the orbits $z_j$ may be different from the number of the
orbits $y_j$.) It is clear that the orbits $z_j$ do not enter $U$ and
that
\begin{equation}
\label{eq:eta}
\big|\tA_G(z_j)\big|\leq \eta,
\end{equation}
where $\eta=O\big(\|F^{\nat k}-G\|_{C^1}\big)$.

It suffices now to show that when $\eps>0$ is sufficiently small the
orbit $h^k$ of $G$ is closed (i.e., a cycle), but not exact, in the
Floer complex $\tCF^{kI}(G,\fh^k)$. To this end, we will prove that
$h^k$ cannot be connected to any of the orbits $z_j$ by a Floer
trajectory of relative index $\pm 1$.

By \eqref{eq:h-index}, we have
$$
\Delta_{G}(\bh^k)=k\Delta_F(\bh)=0 \textrm{ and }
\CA_{G}(\bh^k)=k\CA_F(\bh)=0
$$
In particular, $\MUCZ(h^k)=\Delta_G(h^k)=0$.

Let now $\bz$ be one of the capped orbits $\bz_j$. Our goal is to show
that every Floer trajectory $u$ connecting the capped orbits $\bz$ and
$\bh^k$ has relative index different from $\pm 1$.  Since
$\MUCZ(\bh^k)=0$ and by \eqref{eq:mean-CZ}, it is enough to prove that
\begin{equation}
\label{eq:Delta}
|\Delta_G(\bz)|> n+1.
\end{equation}

The orbit $z$ does not enter $U$. Thus, by \cite[Thm.\
3.1]{GG:hyperbolic}, there exists a constant $e>0$, depending on $U$,
but not on $k$, such that the energy of $u$ is bounded from below by
$e$. In other words, using the fact that $\CA_G(\bh^k)=0$, we have
$$
|\CA_G(\bz)|>e.
$$
Set $\eps<e$.

By \eqref{eq:eps}, $\Delta_G(\bz)\in (\ell-\eps,\,\ell+\eps)$ for some
$\ell\in\Z$. If $\ell=0$, and hence $|\Delta_G(\bz)|<\eps$, we also
have
$$
|\CA_G(\bz)|<\eps+\eta
$$
by \eqref{eq:eta}. This is impossible when $\eta>0$ is smaller than
$|e-\eps|$, i.e., when $G$ is sufficiently $C^1$-close to
$F^{\nat k}$, since $\eps<e$. Thus $\ell\neq 0$, and therefore
$$
|\Delta_G(\bz)|> 2\NT-\eps.
$$
Recall that $\NT\geq n/2+1$ by the assumptions of the theorem. Hence,
when $\eps<1$, we have
$$
|\Delta_G(\bz)|\geq n+2-\eps>n+1.
$$ 
This proves \eqref{eq:Delta}, completing the proof of the lemma and of
the theorem.
\end{proof}

\subsection{Proof of Theorem \ref{thm:generic}} The argument is quite
standard; it follows the same line of reasoning as the proof of
\cite[Prop.\ 1.6]{GG:generic}, which, in turn, has a lot of
similarities with, e.g., an argument in \cite{Hi}.

\begin{proof}
  
  It suffices to show that a Hamiltonian diffeomorphism
  $\varphi_H\in\FF_\ff$ has a non-hyperbolic periodic orbit in some
  homotopy class $\ff^k$ or $\varphi_H$ has infinitely many periodic
  orbits in $\ff^\N$. Indeed, the presence of a non-hyperbolic
  periodic orbit $x$ implies, by the Birkhoff--Lewis--Moser
  fixed-point theorem (see \cite{Mo}), the existence of infinitely
  many periodic orbits in a tubular neighborhood of $x$
  $C^\infty$-generically for Hamiltonian diffeomorphisms close to
  $\varphi_H$.
 
  To this end, observe that since $\HF(H,\ff)=0$ due to the condition
  $\ff\neq 1$, the Hamiltonian $H$ necessarily has one-periodic orbits
  in the class $\ff$ with odd and with even Conley--Zehnder
  indices. As a consequence, it has either a non-hyperbolic
  one-periodic orbit or a hyperbolic orbit with an odd number of real
  Floquet multipliers in the interval $(-1,\,0)$. In the former case
  the proof is finished.

  In the latter case, let us apply this argument to $\varphi_H^2$. By
  the assumptions of the theorem $\ff^2\neq 1$ and thus
  $\HF(H^{\nat 2},\ff^2)=0$.  Hence, $H$ has two-periodic orbits in
  the class $\ff^2$ with odd and with even Conley--Zehnder
  indices. The second iterations of one-periodic orbits from the class
  $\ff$ are necessarily positive hyperbolic. Therefore, there exists a
  simple two-periodic orbit in $\ff^2$, which is either non-hyperbolic
  or hyperbolic with an odd number of real Floquet multipliers in the
  interval $(-1,\,0)$. In the former case, the proof is finished, and
  in the latter we repeat this process for $\varphi^4$ and so forth.

  As a result, we will either find a non-hyperbolic orbit in some
  class $\ff^k$ or construct a sequence of simple periodic orbits in
  $\ff^\N$.
\end{proof}

\begin{Remark}
  As is clear from the proof, it is sufficient to assume only that
  $\ff^k\neq 1$ when $k$ is a power of $2$. In this case, the result
  asserts the generic existence of infinitely many periodic orbits in
  the set of homotopy classes $\ff^\N$, although these orbits may now
  be contractible.
\end{Remark}

\end{document}